%
%
%
%
%
%
%
%
%


\documentclass[12pt]{elsarticle}
\usepackage{lineno,hyperref}
\usepackage{amsmath,amssymb,amsthm}
\usepackage{txfonts}
\usepackage{amssymb}
\usepackage[latin1]{inputenc}
\usepackage{version,tabularx,multicol}
\usepackage{graphicx,float,psfrag}
\usepackage{stmaryrd}
 \usepackage{etoolbox,xstring,mfirstuc,textcase}
 \theoremstyle{plain}
\newtheorem{theorem}{Theorem}[section]

 \newtheorem{corollary}[theorem]{Corollary}

 \newtheorem{proposition}[theorem]{Proposition}

 \newtheorem{lemma}[theorem]{Lemma}

\newtheorem{remark}{Remark}[section]

 \setlength{\topmargin}{-1.5cm}
 \setlength{\oddsidemargin}{0pt}
 \setlength{\evensidemargin}{0pt}
 \setlength{\textwidth}{16.4cm}
 \setlength{\textheight}{24cm}
 \setlength{\parindent}{12pt}
 \setlength{\parskip}{4pt}




\allowdisplaybreaks[4]

\newcommand{\ind}{{\bf 1}}
  
  \makeatletter
  \@addtoreset{equation}{section}
  \makeatother

 \def\beqlb{\begin{eqnarray}}\def\eeqlb{\end{eqnarray}}
 \def\beqnn{\begin{eqnarray*}}\def\eeqnn{\end{eqnarray*}}

\newcommand{\bcen}{\begin{center}}
\newcommand{\ecen}{\end{center}}
\newcommand{\bgeqn}{\begin{equation}}
\newcommand{\edeqn}{\end{equation}}

\pagestyle{myheadings}

\modulolinenumbers[5]

\journal{Journal of \LaTeX\ Templates}









\bibliographystyle{elsarticle-num}

\begin{document}

\begin{frontmatter}

\title{On empty balls of critical 2-dimensional branching random walks}

\author[address]{Shuxiong Zhang}
\ead{shuxiong.zhang@mail.bnu.edu.cn}
\address[address]{School of Mathematics and Statistics, Anhui Normal University}


\begin{abstract}

Let $\{Z_n\}_{n\geq 0 }$ be a critical $d$-dimensional branching random walk started from a Poisson random measure whose intensity measure is the Lebesgue measure on $\mathbb{R}^d$. Denote by $R_n:=\sup\{u>0:Z_n(\{x\in\mathbb{R}^d:|x|<u\})=0\}$ the radius of the largest empty ball centered at the origin of $Z_n$. In \cite{reves02}, R\'ev\'esz (2002) conjectured that if $d=2$, then

$$\lim_{n\to\infty}\frac{R_n}{\sqrt n}\overset{\text{Law}}=\text{non-trival~distribution}.$$
This work confirms the conjecture. It turns out that the limit distribution can be precisely characterized through the super-Brownian motion. Moreover, we also give complete results of empty balls of the branching random walk with infinite second moment offspring law. As a by-product, this article also improves the assumption of maximal displacements of branching random walks in Lalley and Shao \cite[Theorem 1]{lalley2015}.
\end{abstract}

\begin{keyword}
Empty balls\sep branching random walks\sep 2-dimensional \sep super-Brownian motion.
\MSC[2020] 60J68\sep 60F05\sep 60G57
\end{keyword}

\end{frontmatter}

\section{Introduction and Main results}
\subsection{Introduction}
 In this paper, we consider a branching random walk $\{Z_n\}_{n\geq0}$ started from the Poisson random measure with Lebesgue intensity. This model is a measure-valued Markov process, which is governed by a probability distribution $\{p_k\}_{k\geq 0}$ on natural numbers (called the offspring distribution) and an $\mathbb{R}^d$-valued random vector $X$ (called the step size or displacement). Let's define it as follows.
At generation $0$, there exist infinitely many particles distributed according to the Poisson random measure with Lebesgue intensity. At generation $1$, each particle first produce children independently according to $\{p_k\}_{k\geq 0}$  and then every children  performs a jump independently of each others from the place where they are born according to the law of the step size $X$, where the branching and motion are independent. The point process $Z_1$ is formed by all the
particles and their positions alive at generation $1$.  At generation $2$, the particles alive at generation $1$ repeat their parent's behaviour independently from the place where they are born and the procedure goes on.
	\par
	
	Since there exist countably many particles at time $0$, we can let $\mathbb{T}^i$, $i\geq1$ be the genealogical tree of the branching process
	rooted by the $i$-th particle of $Z_0$; see Shi \cite[p. 13-14]{shizhan2015}.  Let $\mathbb{T}=\cup^{\infty}_{i=1}\mathbb{T}^i$ and
	$X_{u}$ be the displacement of particle $u$. For each particle $u\in\mathbb{T}$, we use $|u|$ to denote the generation of $u$. If $|u|=0$, then $X_u$ is determined by the Poisson random measure $Z_0$. Otherwise, $\{X_u: u\in\mathbb{T}, |u|\neq 0\}$ are i.i.d. copies of the step size $X$.
	 Let $S_{\omega}:=\sum_{u\preceq\omega}X_u$ be the position of particle $\omega\in\mathbb{T}$, where $u\preceq \omega$
	means that $u=\omega$ or $u$ is an ancestor of $\omega$.  The
	population and their locations
	 at generation $n$ forms a point process given by
	$$Z_n=\sum_{|\omega|=n}\delta_{S_{\omega}}.$$
\par
Let $m:=\sum_{k\geq0}kp_k$ be the mean of the offspring law. We call $\{Z_n\}_{n\geq0}$
 a supercritical (critical, subcritical) branching random walk if $m>1$ ($= 1$, $< 1$). In this work, we always assume that
 $$m=1, p_1<1~\text{and}~\mathbb{E}[X]=0.$$
Moreover, we always assume that the step size $X$ has a positive definite covariance matrix
$${\bf C}=(c_{i,j})_{1\leq i,j\leq d }.$$
Let $B(u):=\{x\in\mathbb{R}^d:|x|<u\}$ be the $d$-dimensional open ball with radius $u$, where $|x|$ stands for the Euclidean norm of the vector $x$. Denote by $$R_n:=\sup\{u>0:Z_n(B(u))=0\}$$ the radius of the largest empty ball centered at the origin of $Z_n$, where we take $0=\sup\emptyset$ by convention. Or, in other words, $R_n$ is the shortest distance of the particles at time $n$ from the origin.

\par
The research on the empty ball was first studied by R\'ev\'esz \cite{reves02} for the binary branching Wiener process (i.e. $p_0=p_2=1/2$ and $X$ is a standard normal random vector) started from $\text{PRM}(\lambda)$, where we use $\text{PRM}(\lambda)$ to stand for the Poisson random measure with the $d$-dimensional Lebesgue measure $\lambda$ as its intensity measure. R\'ev\'esz \cite{reves02} proved that if $d=1$, then $R_n/n$ converges in law to an exponential random variable as $n\to\infty$. For high-dimensional cases, he presented following two conjectures (see \cite[Conjecture 1]{reves02} ):\\
\noindent (i) If $d=2$, then for any $r\in(0,\infty)$,
\begin{align}\label{5ju8iw23}
\lim_{n\to\infty}\mathbb{P}\left(\frac{R_n}{\sqrt n}\geq r\right)=e^{-F_2(r)}\in(0,1),
\end{align}
where $F_2(r)$ satisfies
$$\lim_{r\to\infty} \frac{F_2(r)}{\pi r^2}=1;$$
(ii) If $d\geq 3$, then for any $r\in(0,\infty)$,
\begin{align}\label{4redff}
\lim_{n\to\infty}\mathbb{P}(R_n\geq r)=e^{-F_d(r)}\in(0,1),
\end{align}
where $F_d(r)$ satisfies
\begin{align}\label{4redff12}
\lim_{n\to\infty}\frac{F_d(r)}{C_d r^{d-2}}=1,
\end{align}
and $\lim_{d\to\infty}C_d/\big[\frac{\pi^{d/2}}{\Gamma(d/2+1)}\big]^{(d-2)/d}=1$ ($\Gamma(\cdot)$ is the Gamma function). Later, Hu \cite[Theorem 1.2]{hu05} partially confirmed R\'ev\'esz's conjecture for $d\geq3$ by showing that $\lim_{n\to\infty}\mathbb{P}(R_n\geq r)$ exists in $(0,1)$. But, (\ref{4redff12}) remains unproven.

 \par
Recently, Xiong and Zhang \cite{xz21} studied empty balls for critical super-Brownian motions in any dimension. For the super-Brownian motion, they showed that in $d\geq 2$, the limit distributions are consistent with R\'ev\'esz's conjectures. Later, by allowing the branching mechanism of the super-Brownian motion is critical, subcritical or has infinite variance, Liu, Xiong and Zhang \cite{zhang25jtp} generalized the work of \cite{xz21} to super-Brownian motions with general branching mechanisms. Theses results of super-Brownian motions \cite{xz21}\cite{zhang25jtp} are complete and refined. Nevertheless, to the best of our knowledge, for the branching Wiener process (or the branching random walk) model, R\'ev\'esz's conjecture for the $2$-dimensional case remains open until now. This article confirms and generalizes this conjecture.
\par

Xiong and Zhang \cite{xzacta} also generalized the results of the branching Wiener process \cite{hu05}\cite{reves02} to the branching random walk by assuming that all components of the step size are independent and centered. If the offspring law is critical and has finite variance, then the results of \cite[Theorems 1.1, 1.2, 1.5]{xzacta} are as follows.

\medskip

\noindent (i)Assume $m=1$, $d=1$, $\sigma^2:=\sum_{k\geq0}k^2p_k-(\sum_{k\geq0}kp_k)^2<\infty$ and $\mathbb{E}[|X|^{\alpha}]<\infty$ for some $\alpha>2$. Then, for $r>0$,

$$\lim_{n\to\infty}\mathbb{P}\left(\frac{R_n}{n}\geq r\right)=e^{-\frac{4r}{\sigma^2}}.$$

\noindent(ii)Assume $m=1$, $d=2$, $\sigma^2<\infty$ and $\mathbb{E}[|X|^4]<\infty$. Then, for any $r>0$,
\begin{align}\label{4hyhy7ht53r}
0<\liminf_{n\to\infty}\mathbb{P}\left(\frac{R_n}{\sqrt n}\geq r\right)\leq\limsup_{n\to\infty}\mathbb{P}\left(\frac{R_n}{\sqrt n}\geq r\right)<1.
\end{align}

\noindent (iii)Assume $m=1$, $d\geq3$, $\sigma^2<\infty$ and $\mathbb{E}[|X|^3]<\infty$. Then, there exists a positive function $F_d(r)$, $r>0$ such that

$$\lim_{n\to\infty}\mathbb{P}\left(R_n\geq r\right)=e^{-F_d(r)}.$$
Moreover,
$$\frac{v_d(r)}{1+\sigma^2 C_d(r)r^2}\leq F_d(r)\leq v_d(r),$$
where $C_d(r)\in(0,\infty)$ is a constant and $v_d(r)$ is the volume of a $d$-dimensional ball with radius $r$.

\medskip

\noindent From above results, one can see that the assumption $\sigma^2<\infty$ is necessary. If not the case, things will become different. Let $L(s),~s\in[0,1]$  be a slowly varying function at $0$. \cite[Theorem 1.8]{xzacta} is stated as follows. Assume $m=1$, $\sum^{\infty}_{k=0}p_ks^k=s+(1-s)^{1+\beta}L(1-s)$, $s\in[0,1]$ for some $\beta\in\left(0,\frac{1}{d}\right]$ and $\mathbb{E}[|X|^{\alpha}]<\infty$ for some $\alpha>(\beta+1)d$. Then, for $r>0$,
\begin{align}\label{78igt53r}
\lim_{n\to\infty}\mathbb{P}\left(\frac{R_n}{b_n}\geq r\right)=\exp\left\{-v_d(r)(\beta^{-1})^{\beta^{-1}}\right\},
\end{align}
where
$$b_n:=\Big[nL\big(\mathbb{P}_{\delta_0}(Z_n(\mathbb{R}^d)>0)\big)\Big]^{\frac{1}{\beta d}},~n\geq1.$$
The above results consider the critical case $m=1$. For subcritical case, \cite[Theorem 1.9]{xzacta} showed that $R_n$ increases exponentially, which is stated as follows. Assume $m\in(0,1)$, $d\geq1$, $\sum_{k\geq0}p_k(k\log k)<\infty$ and $\mathbb{E}\left[|X|^{\alpha}\right]<\infty$ for some $\alpha>1$. Then, for $r>0$,
$$\lim_{n\to\infty}\mathbb{P}\left(\frac{R_n}{(1/m)^\frac{n}{d}}\geq r\right)=e^{-Q(0)v_d(r)},$$
where $Q(0)\in(0,\infty)$ is a constant.

\par
In this article, we shall refine the work of Xiong and Zhang \cite{xzacta} in three aspects. Firstly, we relax the assumption of the independence between each component of the step size. Secondly, by using the maximal displacement of branching random walks, we strength (\ref{4hyhy7ht53r}) by showing the existence of the limit. As a by-product, this work also improves the assumption of Lalley and Shao [8, Theorem 1]. Thirdly, in fact, for any $\beta\in(0,1)$, the offspring law satisfying $\sum^{\infty}_{k=0}p_ks^k=s+(1-s)^{1+\beta}L(1-s)$ implies $\sigma^2=\infty$. Since (\ref{78igt53r}) assumes $\beta\in\left(0,\frac{1}{d}\right]$, the result is not complete.  By using large deviation probabilities of random walks and the spine decomposition method, we show that if $\sigma^2=\infty$, then the empty ball has three kinds of phase transition phenomenons, which completes the results of \cite[Theorem 1.8]{xzacta}. Meanwhile, we use the size-biased branching random walk to characterize the limit distribution of empty balls of high dimensional ($d\geq3$) critical branching random walks, thereby improving upon the result in Hu \cite[Theorem 1.2]{hu05}, which does not provide an explicit expression for the limit.

\subsection{Main results}

In this section, we will give our main results. Since the statement of theorems involve the super-Brownian motion, we first give a brief introduction to it as follows.
\par
Let $\psi$ be a function of the form
   $$\psi(u)=au+bu^2+\int^{\infty}_{0}(e^{-ru}-1+ru)n(dr), u\geq0,$$
where $a\in\mathbb{R},b\geq0$ and $n$ is a $\sigma$-finite measure on $(0,\infty)$ such that $\int_{(0,\infty)}r\wedge r^2 n(dr)<\infty$. The super-Brownian motion $\{X_t\}_{t\geq0}$ with initial measure $\mu$ and branching mechanism $\psi$ is a measure-valued Markov process, whose transition probabilities are characterized through their Laplace transforms. For any bounded non-negative function $\phi$ and $t\geq0$, we have
   \begin{align}\label{ujytas}
  \mathbb{E}_\mu\left[e^{-\int_{\mathbb{R}^d}\phi(x)X_t(dx)}\right]=e^{-\int_{\mathbb{R}^d} u(t,x)\mu(dx)},
   \end{align}
  where $u(t,x)$ is the unique positive solution to the following nonlinear partial differential equation:
  \begin{align}\label{6hjujufr4}
   \begin{cases}
   \frac{\partial u(t,x)}{\partial t}=\mathcal{L}u(t,x)-\psi(u(t,x)),\cr
   u(0,x)=\phi(x).
   \end{cases}
   \end{align}
In above, $\mathcal{L}:=\sum_{1\leq i,j\leq d }\frac{c_{i,j}}{2}\frac{\partial^2 }{\partial x_i \partial x_j}$ is the
generator of a $d$-dimensional Brownian motion, whose covariance matrix at time $1$ is ${\bf C}=(c_{i,j})_{1\leq i,j\leq d }$. We refer the reader to \cite{Etheridge}, \cite{Li} and \cite{perkins} for a more detailed overview to super-Brownian motions. In this paper, for a measure $\mu$, we always use $\mathbb{E}_{\mu}$ to denote the expectation with respect to $\mathbb{P}_{\mu}$, the probability measure under which the branching random walk $\{Z_n\}_{n\geq0}$ (or the super-Brownian motion $\{X_t\}_{t\geq0}$) has initial value $Z_0=\mu$ (or $X_0=\mu$). For ease of notation, we write $\mathbb{P}=\mathbb{P}_{\text{PRM}(\lambda)}.$

\par
Recall that $\sigma^2=\sum_{k\geq0}k^2p_k-(\sum_{k\geq0}kp_k)^2$ is the variance of the offspring law. For $x\in\mathbb{R}^d$ and $r>0$, let $B(x,r):=\{z\in\mathbb{R}^d:|z-x|<r\}$ and $x^{(i)}$ be the $i$-th component of $x$, $1\leq i\leq d$. Set $y^+:=\max\{y,0\}$ for $y\in\mathbb{R}.$ Recall that we always consider the critical branching random walk (i.e. $\sum_{k\geq0}kp_k=1$) and the step size is centered.
\begin{theorem}\label{thdim2}Assume $d=2$ and $\sigma^2<\infty$. If $\mathbb{E}[((X^{(1)})^+)^4+((X^{(2)})^+)^4]<\infty$, then for any $r>0$,
\begin{align}
 \lim_{n\to\infty}\mathbb{P}\left(\frac{R_n}{\sqrt n}\geq r\right)= \exp\left\{\int_{\mathbb{R}^2}\log\mathbb{P}_{\delta_0}\left(X_1(B(x,r))=0\right)dx\right\}\in(0,1),\nonumber
\end{align}
where $\{X_t\}_{t\geq0}$ is a two-dimensional super-Brownian motion with branching mechanism $\psi(u)=\sigma^2u^2$.
\end{theorem}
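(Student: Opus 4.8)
The plan is to follow a classical Poissonization and scaling-limit strategy, isolating the upper bound as the key new input. Because $Z_0 = PRM(\lambda)$ with $\lambda$ the Lebesgue measure on $\mathbb{R}^2$, the family of BRW's emanating from distinct ancestors are independent, so by the superposition/thinning property of Poisson random measures one has the exact factorization
\begin{align}
\mathbb{P}\left(\frac{R_n}{\sqrt n}\geq r\right)=\mathbb{P}\big(Z_n(B(0,r\sqrt n))=0\big)=\exp\left\{-\int_{\mathbb{R}^2}\big(1-q_n(x,r\sqrt n)\big)\,dx\right\},\nonumber
\end{align}
where $q_n(x,u):=\mathbb{P}_{\delta_x}\big(Z_n(B(0,u))=0\big)$ is the probability that a single BRW started from one particle at $x$ avoids the ball $B(0,u)$ by time $n$. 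After the substitution $x\mapsto \sqrt n\,y$, the exponent becomes $-n\int_{\mathbb{R}^2}\big(1-q_n(\sqrt n\,y,\,r\sqrt n)\big)\,dy$, so everything reduces to the pointwise and dominated convergence of $n\big(1-q_n(\sqrt n\,y,\,r\sqrt n)\big)$ toward $-\log\mathbb{P}_{\delta_0}\big(X_1(B(y,r))=0\big)=-\log\mathbb{P}_{\delta_0}\big(X_1(B(0,r))>0 \text{ fails}\big)$ — here I use translation invariance to recenter the ball at $y$. The identity (\ref{ujytas}) with $\phi\uparrow\infty\cdot\mathbf 1_{B}$ shows $\mathbb{P}_{\delta_0}(X_1(B(y,r))=0)=e^{-V_r(y)}$ for the maximal solution $V_r$ of the PDE with $+\infty$ boundary data on the ball, and one checks $\int_{\mathbb{R}^2}V_r(y)\,dy<\infty$ since $V_r(y)$ decays fast in $|y|$ (comparable to the SBM extinction/hitting tail), which will supply the finiteness and nondegeneracy $\in(0,1)$ of the limit.

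For the pointwise convergence, the lower bound on the limit (equivalently, the asymptotic upper bound on $n(1-q_n)$, i.e. a lower bound on the hitting probability $1-q_n$) is the ``easy'' direction: under the usual diffusive rescaling $Z_n\mapsto \frac1n Z_{\lfloor nt\rfloor}(\sqrt n\,\cdot)$ one has convergence to the super-Brownian motion $\{X_t\}$ with branching mechanism $\psi(u)=\sigma^2 u^2$ (this is standard under $\sigma^2<\infty$ and a second-moment step-size condition, and is presumably recorded earlier in the paper), and a single rescaled ancestor at $\sqrt n\,y$ corresponds to $\delta_y$ in the limit; since the total-mass-one normalization survives, $n\,\mathbb{P}_{\delta_{\sqrt n y}}(Z_n(B(0,r\sqrt n))>0)\to$ the corresponding SBM quantity from below by a soft weak-convergence/portmanteau argument applied to the open set $\{$measure charges the open ball$\}$. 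The genuinely hard part is the matching upper bound on $1-q_n$, i.e. showing the BRW does not hit the ball \emph{too often}: the event $\{Z_n(B(0,u))>0\}$ is governed by the indicator of an open set, which is not a continuous functional of the measure, so weak convergence alone fails (this is exactly the obstruction flagged in the introduction).

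The plan to close the upper bound, following the hint at (\ref{465gth51}), is to dominate the local hitting probability by a \emph{small-time maximal displacement} estimate. Concretely, split time $n = n-\ell_n + \ell_n$ with $\ell_n$ a slowly growing cutoff ($\ell_n\to\infty$, $\ell_n/n\to 0$); if $Z_n$ puts a particle in $B(0,u)$, then at time $n-\ell_n$ there is an ancestor within distance $M_{\ell_n}$ of the ball, where $M_{\ell_n}$ is the maximal displacement over $\ell_n$ steps of the descendant BRW. On the event of non-extinction by time $n-\ell_n$, the number of particles in any fixed ball is $O(n)$ in the relevant scaling, and the probability that the $\ell_n$-generation maximal displacement of a critical BRW exceeds $\epsilon\sqrt n$ decays fast enough — this is precisely where the moment hypothesis $\mathbb{E}[|X|^4]<\infty$ (the sharp condition, per the remark about improving \cite[Theorem 1]{lalley2015}) enters, via a Chernoff/maximal-inequality bound on $M_{\ell_n}$. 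Thus $\mathbb{P}_{\delta_{\sqrt n y}}(Z_n(B(0,r\sqrt n))>0)\leq \mathbb{P}_{\delta_{\sqrt n y}}\big(Z_{n-\ell_n}(B(0,(r+\epsilon)\sqrt n))>0\big)+(\text{negligible error})$, and now the event on the right is about an \emph{open} (in fact closed-ball-enlarged-to-open) region at a time still of order $n$, so the weak-convergence upper bound via a closed-set portmanteau inequality applies and gives $\limsup n(1-q_n)\leq -\log\mathbb{P}_{\delta_y}(X_1(\overline{B(0,r+\epsilon)})=0)$; letting $\epsilon\downarrow0$ and using continuity of $r\mapsto V_r(y)$ (from right-continuity of the SBM hitting probabilities, since $\overline{B(0,r)}=\bigcap_{\epsilon>0}B(0,r+\epsilon)$) finishes the pointwise convergence. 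The remaining routine steps are: (i) produce a uniform-in-$n$, integrable-in-$y$ domination for $n(1-q_n(\sqrt n y, r\sqrt n))$ — again from the moment assumption plus a crude Gaussian-type tail on where a single BRW can reach — to justify the passage to the limit inside $\int_{\mathbb{R}^2}$ by dominated convergence, and (ii) verify $0<\int_{\mathbb{R}^2}V_r(y)dy<\infty$ so that the limit lies strictly in $(0,1)$.
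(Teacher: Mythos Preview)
Your overall scaffolding (Poissonization, rescaling, dominated convergence, and isolating the upper bound on the hitting probability as the hard step) matches the paper, and you correctly identify the small-time maximal displacement bound as the key tool. But your implementation of the hard direction has a genuine gap.

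After your split $n=(n-\ell_n)+\ell_n$ with $\ell_n=o(n)$, the error term is indeed negligible (the union bound over $\mathbb{E}[|Z_{n-\ell_n}|]=1$ particles together with Lemma~\ref{566u9g13q} gives an error of order $\ell_n/n\to 0$). However, the surviving main term
\[
n\,\mathbb{P}_{\delta_0}\!\big(Z_{n-\ell_n}(\sqrt n\,B(x,r+\epsilon))>0\big)
\]
is \emph{exactly the same kind of discontinuous indicator event} you started from, just with $r$ replaced by $r+\epsilon$ and $n$ by $n-\ell_n$. You have not converted anything into a continuous functional. Your appeal to ``closed-set portmanteau'' fails: for an open ball $B$, the set $\{\mu:\mu(B)=0\}$ is \emph{closed} in the weak topology on measures (if $\mu_n(B)=0$ and $\mu_n\Rightarrow\mu$, then $\mu(K)\le\liminf\mu_n(B)=0$ for every compact $K\subset B$), so portmanteau gives only $\limsup_n \mathbb{P}_{n\delta_0}(\mu_n(B)=0)\le \mathbb{P}_{\delta_0}(X_1(B)=0)$, which is the \emph{wrong} inequality. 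Replacing $B$ by its closure does not help either, since $\{\mu:\mu(\overline B)=0\}$ is neither open nor closed. No enlargement of the ball fixes this; the obstruction is structural.

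The paper's resolution is genuinely different. It splits at time $\lfloor(1-s)n\rfloor$ for a \emph{fixed} $s\in(0,1)$ and uses the branching property to write
\[
\mathbf{P}\big(Z_n(\sqrt n\,B(x,r))>0\big)
=\mathbf{E}\Big[1-\prod_{v\in Z_{\lfloor(1-s)n\rfloor}}\!\!\big(1-p(S_v)\big)\Big]
\le \mathbf{E}\Big[1-\exp\Big\{-2\!\int p(u)\,Z_{\lfloor(1-s)n\rfloor}(du)\Big\}\Big],
\]
where $p(u)=\mathbb{P}_{\delta_u}(Z_{\widetilde{sn}}(\sqrt n\,B(x,r))>0)$. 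The point of Lemma~\ref{566u9g13q} is not to kill an error term but to provide a pointwise bound $p(u)\le \frac{2c_\varepsilon s}{n}$ for $u$ outside $\sqrt n\,B(x,r+\varepsilon)$, while the survival bound gives $p(u)\le \frac{c}{sn}$ inside. This turns the right-hand side into a \emph{Laplace functional}
\[
1-\mathbb{E}^{1/n}_{n\delta_0}\Big[\exp\Big\{-\tfrac{4c_\varepsilon s}{n}Z_{\lfloor(1-s)n\rfloor}(\sqrt n\,B^c)-\tfrac{2c}{sn}Z_{\lfloor(1-s)n\rfloor}(\sqrt n\,B)\Big\}\Big],
\]
to which weak convergence of the rescaled BRW to SBM applies directly (bounded test function, boundary sphere has zero SBM mass). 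One then sends $s\to 0$ and $\varepsilon\to 0$. The conversion of the product over particles into an exponential integral --- thereby replacing the discontinuous hitting event by a continuous Laplace functional --- is precisely the idea missing from your proposal.
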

\begin{remark}\label{4reer43}
Though \cite[Theorem 1.2]{xz21} only deal with the case that the generator $\mathcal{L}$ defined in (\ref{6hjujufr4}) is a Laplace operator, one can easily imitate its proof to show that the theorem still holds for general $\mathcal{L}$. Because the proof of \cite[Theorem 1.2]{xz21} mainly use the scale invariance property of $\{X_t\}_{t\geq0}$, which can be easily proved for general $\mathcal{L}$ by imitating proofs of \cite[p. 52]{Etheridge}. From \cite[Theorem 1.2]{xz21}, we have
$$\lim_{r\to\infty}\frac{\int_{\mathbb{R}^2}-\log\mathbb{P}_{\delta_0}\left(X_1(B(x,r))=0\right)dx}{\pi r^2/\sigma^2}=1.$$
If the offspring law of $\{Z_n\}_{n\geq0}$ satisfies $p_0=p_2=\frac{1}{2},$ then $\sigma^2=1$. Thus, comparing with (\ref{5ju8iw23}), Theorem \ref{thdim2} confirms \cite[Conjecture 1]{reves02} of $d=2$ in a very general setting.
\end{remark}

\par
The following theorem considers the case of $\sigma^2=\infty$. To state the theorem, we need introduce the following notations. Let $L(s)$, $s\in[0,1]$ be a slowly varying function as $s\to0+$ (i.e. $\lim_{s\to0+}L(sx)/L(s)=1$ for any $x\in(0,1]$). Recall that $v_d(r)=\frac{\pi^{d/2}r^d}{\Gamma(d/2+1)}$ (where $\Gamma(\cdot)$ is the so-called Gamma function) is the volume of a
$d$-dimensional ball with radius $r$. Let $Q^*$ be the probability under which the size-biased critical branching random walk $\{Z_n\}_{n\geq0}$ with spine $\{w_n\}_{n\geq0}$ and $Z_0=\delta_0$ is defined. Recall that $S_u$ is the position of particle $u$ and $X_u$ is the displacement of $u$. Let $b(w_k)$ be the set of brothers of the spine particle $w_{k}$. Let $\{Z^u_k\}_{k\geq0}$ be the sub-branching random walk emanating from particle $u$ with $Z^{u}_0:=\delta_0$. We refer the reader to Case 3 of Section \ref{8i87ujr4g} below for a more detailed explanation of above notations.
\begin{theorem}\label{thdim4} Assume $d\geq1$.  \\
(i) If $\sum^{\infty}_{k=0}s^kp_k=s+(1-s)^{1+\beta}L(1-s)$ with $\beta\in(0,\frac{2}{d}\wedge1)$ and $\mathbb{E}[|X|^{\alpha}]<\infty$ for some $\alpha>(\beta d+d)\vee 2$, then
\begin{align}
\lim_{n\to\infty}\mathbb{P}\left(\frac{R_n}{b_n}\geq r\right)=\exp\left\{-v_d(r)(\beta^{-1})^{\beta^{-1}}\right\},\nonumber
\end{align}
where
\begin{align}
b_n:=[nL(\mathbb{P}_{\delta_0}(Z_n(\mathbb{R}^d)>0))]^{\frac{1}{\beta d}},~n\geq1.\nonumber
\end{align}
(ii) If $\lim_{n\to\infty} n^{^{1+\beta}}\sum^{\infty}_{k=n}p_k=:\kappa(\beta)\in(0,\infty)$ with $\beta=\frac{2}{d}\in(0,1)$ and $\sum^d_{i=1}\mathbb{E}[((X^{(i)})^+)^{2+\frac{2}{\beta}}]<\infty$, then
\begin{align}
\lim_{n\to\infty}\mathbb{P}\left(\frac{R_n}{\sqrt n}\geq r\right)=\exp\left\{\int_{\mathbb{R}^d}\log\mathbb{P}_{\delta_0}\left(X_1(B(x,r))=0\right)dx\right\}\in(0,1),\nonumber
\end{align}
where $\{X_t\}_{t\geq0}$ is a $d$-dimensional super-Brownian motion with the stable branching mechanism $\psi(u)=\frac{1}{\beta}{\kappa(\beta)\Gamma(1-\beta)}u^{1+\beta}$.\\
(iii) If $\sum^{\infty}_{k=0}k^{1+\beta}p_k<\infty$ with $\beta\in(\frac{2}{d},1]$ and $\mathbb{E}[|X|^2]<\infty$, then
\begin{align}
\lim_{n\to\infty}\mathbb{P}({R_n}\geq r)=e^{-I_{B(r)}}\in(0,1),\nonumber
\end{align}
where
$$
I_{B(r)}:=\mathbb{E}_{Q^*}\left[\int_{B(r)}\Big[{1+\sum^{\infty}_{k=1}\sum_{u\in b(w_{k+1})}Z^u_{k-1}(B(r)-y+S_{w_k}-X_u)}\Big]^{-1}dy\right].
$$
\end{theorem}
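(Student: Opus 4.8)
The plan is to reduce all three parts to one exact identity and then analyse it regime by regime. Since $Z_0$ is a Poisson random measure with Lebesgue intensity and the sub-branching-random-walks issued by distinct initial particles are independent, Poisson thinning gives, for any scaling sequence $a_n>0$,
\begin{align}
\mathbb{P}\big(R_n\geq r a_n\big)=\mathbb{P}\big(Z_n(B(ra_n))=0\big)=\exp\left\{-\int_{\mathbb{R}^d}\mathbb{P}_{\delta_0}\big(Z_n(B(x,ra_n))>0\big)\,dx\right\},\nonumber
\end{align}
and by the many-to-one formula the exponent $J_n(r):=\int_{\mathbb{R}^d}\mathbb{P}_{\delta_0}(Z_n(B(x,ra_n))>0)\,dx$ is at most $v_d(ra_n)<\infty$, so the identity is meaningful. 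Everything then reduces to the asymptotics of $J_n(r)$, with $a_n=b_n$ in (i), $a_n=\sqrt n$ in (ii) and $a_n=1$ in (iii); the three ranges $\beta d<2$, $\beta d=2$, $\beta d>2$ correspond to three genuinely different mechanisms by which $Z_n$ reaches a neighbourhood of the origin, and I would treat them separately.

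\emph{Part (i).} When $\beta d<2$ a surviving branching random walk of length $n$ from one particle has $\asymp n^{1/\beta}$ individuals spread over diffusive radius $\asymp\sqrt n=o(b_n)$; hence, conditionally on survival, it covers a positive fraction of $B(rb_n)$ with probability $\to1$ as soon as its root lies in $B((r-\varepsilon)b_n)$, while an initial particle at distance $\gg b_n$ has no descendant reaching $B(rb_n)$ except on a negligible event. Thus $\mathbb{P}_{\delta_0}(Z_n(B(x,rb_n))>0)=(1+o(1))\mathbb{P}_{\delta_0}(Z_n(\mathbb{R}^d)>0)$ uniformly on $|x|\leq(r-\varepsilon)b_n$, is always $\leq\mathbb{P}_{\delta_0}(Z_n(\mathbb{R}^d)>0)$, and is $\leq\mathbb{P}_{\delta_0}(M_n\geq|x|-rb_n)\leq\mathbb{P}(|S_n|\geq|x|-rb_n)$ for $|x|\geq(r+\varepsilon)b_n$, $M_n$ being the maximal displacement. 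Integrating, combining with the survival asymptotics $\mathbb{P}_{\delta_0}(Z_n(\mathbb{R}^d)>0)\sim(\beta\,nL(\mathbb{P}_{\delta_0}(Z_n(\mathbb{R}^d)>0)))^{-1/\beta}$ of a critical $(1+\beta)$-stable Galton--Watson process and with the definition of $b_n$, one gets $b_n^{d}\mathbb{P}_{\delta_0}(Z_n(\mathbb{R}^d)>0)\to(\beta^{-1})^{\beta^{-1}}$, hence $J_n(r)\to v_d(r)(\beta^{-1})^{\beta^{-1}}$ after $\varepsilon\downarrow0$. The only delicate input is $\int_{|x|>(r+\varepsilon)b_n}\mathbb{P}(|S_n|\geq|x|-rb_n)\,dx=o(1)$; by a one-big-jump bound $\mathbb{P}(|S_n|\geq s)\leq C(ns^{-\alpha}+e^{-cs^2/n})$ this holds exactly when $\alpha>\beta d+d$, which is why $\mathbb{E}[|X|^{\alpha}]<\infty$ with $\alpha>(\beta d+d)\vee2$ is optimal; the same sharp tail for the maximal displacement of a critical branching random walk is what upgrades the earlier maximal-displacement theorem to optimal moments.

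\emph{Part (ii).} Here $a_n=\sqrt n$, the natural object is the rescaled cloud $c_n^{-1}Z_n(\sqrt n\,\cdot)$ with $c_n=n^{1/\beta}$, and after $x=\sqrt n\,y$ one has $J_n(r)=\int_{\mathbb{R}^d}n^{d/2}\mathbb{P}_{\delta_0}(Z_n(B(\sqrt n y,r\sqrt n))>0)\,dy$. The known convergence of critical branching random walks to super-Brownian motion, in canonical-measure form (the survival probability normalising the total mass), makes $\mathbb{N}_0(X_1(B(y,r))>0)=-\log\mathbb{P}_{\delta_0}(X_1(B(y,r))=0)$ the natural candidate for the pointwise limit of the integrand, $\mathbb{N}_0$ the canonical measure of the super-Brownian motion with branching mechanism $\psi(u)=\frac1\beta\kappa(\beta)\Gamma(1-\beta)u^{1+\beta}$; since $\int_{\mathbb{R}^d}\mathbb{N}_0(X_1(B(y,r))>0)\,dy=-\log\mathbb{P}_{\mathrm{Leb}}(X_1(B(r))=0)$, this gives the stated value. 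The lower bound $\liminf_n n^{d/2}\mathbb{P}_{\delta_0}(Z_n(B(\sqrt n y,r\sqrt n))>0)\geq\mathbb{N}_0(X_1(B(y,r))>0)$ is soft: apply the canonical-measure convergence to the bounded functionals $\mu\mapsto1-e^{-\theta\mu(B(y,r))}$ (continuous $\mathbb{N}_0$-a.e., as $\mathbb{N}_0$ does not charge configurations with mass on $\partial B(y,r)$) and let $\theta\to\infty$; Fatou then gives $\liminf J_n(r)\geq\int_{\mathbb{R}^d}\mathbb{N}_0(\cdots)$. \textbf{The matching upper bound is the main obstacle, and the heart of the paper.} The issue is that $\mathbf{1}\{\mu(B(y,r))>0\}$ is not continuous — a single branching-random-walk particle carries mass $1/c_n$, invisible in the limit, yet forces $Z_n(B(x,r\sqrt n))>0$ — so $\mathbf{1}\{Z_n(B(x,r\sqrt n))>0\}$ cannot be dominated by a linear functional of $Z_n$ at the diffusive scale without destroying the survival factor $\asymp n^{-d/2}$. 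The device I would use is to peel off the last $m=m(n)$ generations, $m\to\infty$, $m/n\to0$: a particle in $B(\sqrt n y,r\sqrt n)$ at time $n$ has an ancestor at time $n-m$ that either already lies within an $\varepsilon$-enlargement of that ball — controlled by the super-Brownian limit at time $1-m/n\to1$ — or whose length-$m$ sub-branching-random-walk has maximal displacement at least the distance to the ball — controlled by a union bound and the tail $\mathbb{P}_{\delta_0}(M_m\geq s)\leq\mathbb{P}(|S_m|\geq s)\leq C(ms^{-\alpha}+e^{-cs^2/m})$ with $\alpha=2+2/\beta$ (this is precisely where $\mathbb{E}[|X|^{2+2/\beta}]<\infty$ enters, optimally). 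Tuning $m$ both yields the pointwise upper bound and furnishes, for large $|y|$, $n^{d/2}\mathbb{P}_{\delta_0}(Z_n(B(\sqrt n y,r\sqrt n))>0)\leq Ce^{-c|y|^2}$, integrable over $\mathbb{R}^d$, so dominated convergence closes $J_n(r)\to\int_{\mathbb{R}^d}\mathbb{N}_0(\cdots)$. Theorem \ref{thdim2} follows from the same scheme with $\psi(u)=\sigma^2u^2$ and $\alpha=4$.

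\emph{Part (iii).} When $\beta d>2$ no rescaling is needed. The size-biasing change of measure (with $dQ^{(n)}/d\mathbb{P}_{\delta_0}=Z_n(\mathbb{R}^d)$ on $\mathcal{F}_n$ and the spine $w_n$ a uniform generation-$n$ particle), Fubini, and the substitution $x=S_{w_n}-y$ give the exact identity
\begin{align}
J_n(r)=\int_{\mathbb{R}^d}\mathbb{E}_{Q^{(n)}}\!\left[\frac{\mathbf{1}\{S_{w_n}\in B(x,r)\}}{Z_n(B(x,r))}\right]dx=\mathbb{E}_{Q^{(n)}}\!\left[\int_{B(r)}\frac{dy}{\#\{u:|u|=n,\ S_u-S_{w_n}\in B(r)-y\}}\right].\nonumber
\end{align}
As $n\to\infty$ the spine re-rooted at its tip, $(w_{n-k})_{k\geq0}$, converges to the spine of the limiting size-biased branching random walk $Q^*$, so the denominator — the tip together with the time-$n$ descendants of the finitely many subtrees branching off recent ancestors of the tip — converges $Q^{(n)}$-a.s.\ to $1+\sum_{k\geq1}\sum_{u\in b(w_{k+1})}Z^u_{k-1}(B(r)-y+S_{w_k}-\rho_u)$. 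Since the integrand is bounded by $1$ and $B(r)$ has finite volume, dominated convergence yields $J_n(r)\to I_{B(r)}$, with $I_{B(r)}\in(0,v_d(r)]\subset(0,\infty)$, hence $e^{-I_{B(r)}}\in(0,1)$. The one genuine point is the a.s.\ finiteness of that limiting denominator: the number $N_j$ of brothers of the spine $j$ generations before the tip has the size-biased offspring law, whose $\beta$-th moment is finite precisely because $\sum_k k^{1+\beta}p_k<\infty$, and such a brother — whose age-$\asymp j$ subtree is rooted at distance $\asymp\sqrt j$ from the tip — reaches within $O(1)$ of the tip with probability at most $\mathbb{E}[(N_j\,C j^{-d/2})^{\beta}]\leq C'j^{-\beta d/2}$; these are summable in $j$ exactly when $\beta d>2$, so Borel--Cantelli leaves only finitely many contributing levels. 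This is where both $\beta\in(2/d,1]$ and $\sum_k k^{1+\beta}p_k<\infty$ are used, and they are optimal. The common technical engine behind all three parts is the lemma bounding the local hitting probability of a sub-branching-random-walk by its small-time maximal displacement; the deepest application is the upper bound in Part (ii).
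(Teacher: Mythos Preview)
Your outline for parts (i) and (iii) matches the paper's argument closely: in (i) the paper also separates near and far initial points and kills the far contribution with Nagaev's inequality $\mathbb{P}(|S_n|\geq s)\leq C(ns^{-\alpha}+e^{-cs^2/n})$, which is exactly where $\alpha>(\beta d+d)\vee 2$ becomes sharp; in (iii) the paper likewise passes through the size-biased representation $\mathbb{P}(Z_n(A-x)\geq1)=\mathbb{E}_{Q^*}[(1+\sum_kY_k)^{-1};S_{w_n}\in A-x]$, truncates the spine at level $K$, and controls the tail by Borel--Cantelli using $\sum_k k^{-\beta d/2}<\infty$.

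In part (ii) your strategy --- peel off a short terminal segment, feed the ancestor configuration into the super-Brownian limit, control the far ancestors by the sub-tree's maximal displacement, and close with dominated convergence --- is also the paper's, but the bound you invoke is too weak and the dominating function you claim is false. The many-to-one estimate $\mathbb{P}_{\delta_0}(M_m\geq s)\leq\mathbb{P}(|S_m|\geq s)$ does \emph{not} yield the uniform control you need: for the dominating function one wants $n^{d/2}\mathbb{P}_{\delta_0}(Z_n(\sqrt n B(y,r))>0)\leq g(y)$ with $g$ integrable and $n$-independent, but Nagaev gives $n^{d/2}\mathbb{P}(|S_n|\geq(|y|-r)\sqrt n)\leq C(|y|-r)^{-(d+2)}+Cn^{d/2}e^{-c(|y|-r)^2}$, and the Gaussian piece blows up in $n$. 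No tuning of $m$ repairs this, because the random-walk tail $\mathbb{P}(|S_n|\geq z\sqrt n)$ is of order one by the CLT, while you need it to absorb a factor $n^{-1/\beta}$. The same problem arises for the pointwise upper bound if you run the paper's iterated-limit scheme (fixed $s=m/n$, then $s\to0$): the far bound must be $O(s/n^{1/\beta})$ so that, multiplied by $Z_{\lfloor(1-s)n\rfloor}(\cdot)\asymp n^{1/\beta}$, it contributes $O(s)$ to the Laplace exponent; many-to-one gives only $O(1)$.

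What is missing is precisely the paper's Lemma~2.6: for every $x>0$ there is $c(x)\asymp x^{-(2+2/\beta)}$ with $n^{1/\beta}\mathbf{P}(M_{\lfloor nr\rfloor}>x\sqrt n)\leq c(x)r$ uniformly in $n,r$. Its proof is not many-to-one but a martingale/optional-stopping argument that couples the short-time hitting probability to the \emph{all-time} maximal-displacement tail $\mathbf{P}(M\geq x)\leq c_1x^{-2/\beta}$ (this is where Theorem~1.3 and its stable-branching analogue enter). With $r=1$ this lemma gives the integrable polynomial dominator $C(|y|-r)^{-(2+2/\beta)}=C(|y|-r)^{-(d+2)}$ on $\mathbb{R}^d$, and with $r=s$ it gives the far-ancestor bound $c_\varepsilon s/n^{1/\beta}$ that makes the Laplace-functional argument go through. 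Your closing sentence correctly names this lemma as the engine of the proof, but the body of your part (ii) does not actually use it; replacing your many-to-one step by Lemma~2.6 (and the Gaussian $e^{-c|y|^2}$ by the polynomial $(|y|-r)^{-(d+2)}$) would make the argument complete.
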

\begin{remark}\label{46gtyy67m}
Theorem \ref{thdim4} (i) improves the assumption $\beta\in(0,\frac{1}{d})$ in \cite[Theorem 1.8]{xzacta} to the optimal assumption $\beta\in(0,\frac{2}{d}\wedge1).$ In \cite[Theorem 1.5]{xzacta}, we assume the step size satisfies $\mathbb{E}[|X|^3]<\infty$. Theorem \ref{thdim4} (iii) weakens the assumption to the finite second moment condition.
\end{remark}
\begin{remark}
Rapenne \cite[Theorem 1.3]{rapenne2023} proved that if $X$ is a $d$-dimensional standard normal random vector or $X$ is lattice, bounded, symmetric and aperiodic, then
$Z_n$ converges weakly to some non-degenerate random variable $\Lambda_{\infty}$ in the sense of vague topology as $n\to\infty$, where the term  $I_{B(r)}$ in Theorem \ref{thdim4} (iii) is used to describe to the characteristic functional of $\Lambda_{\infty}$. In other words, for any continuous function $f$ with a compact support on $\mathbb{R}^d$ and $\lambda\in \mathbb{R}^d$, it follows that
\begin{align}
\lim_{n\to\infty}\mathbb{E}[e^{i\lambda Z_n(f)}]=\lim_{n\to\infty}\mathbb{E}[e^{i\lambda \Lambda_{\infty}(f)}].
\end{align}
This means $Z_n(f)$ converges weakly to $\Lambda_{\infty}(f)$ as $n\to\infty$. Let $f$ be a non-negative continuous function with a compact support such that $\{x\in \mathbb{R}^d:f(x)>0\}=B(r)$. Then, by Theorem \ref{thdim4} (iii), it entails that
\begin{align}
\lim_{n\to\infty}\mathbb{P}({R_n}\geq r)=\lim_{n\to\infty}\mathbb{P}(Z_n(B(r))=0)=\lim_{n\to\infty}\mathbb{P}(Z_n(f)=0)=e^{-I_{B(r)}}.
\end{align}
Since $\ind_{\{0\}}(y), y\in\mathbb{R}$ is not a suitable test function for the weak convergence, Theorem \ref{thdim4} (iii) is not a direct consequence of \cite[Theorem 1.3]{rapenne2023}. Moreover, using the method of Theorem \ref{thdim4} (iii), one can weaken assumptions of the step size in \cite[Theorem 1.3]{rapenne2023}.
\end{remark}

 The main difficulty of Theorem \ref{thdim2} lies in the lower bound. The precise upper bound can be easily obtained by using the fact that the super-Brownian motion is a scaling limit of the branching random walk; see \cite[(3.12)]{xzacta}. For the lower bound, since the proofs involve the indicative function, which is not continuous, the desired lower bound can not be directly obtained through the scaling limit; see \cite[Remark 1.4]{xzacta}. To overcome this difficulty, we prove the lower bound with three steps. Firstly, we study the maximal displacement of the critical branching random walk, which improves the assumption of \cite[Thorem 1]{lalley2015}. Our method to the proof mainly use maximums of random walks and a stopping time trick, which is different from the overshoot distribution strategy used in \cite{lalley2015}. Secondly, by using above results for maximal displacements, we obtain an uniform upper bound of the tail probability of the maximum of a 1-dimensional critical branching random walk at some finite time; see Lemma \ref{566u9g13q} below. Finally, we use the maximum of the 1-dimensional critical branching random walk to dominate the local hitting probability of the 2-dimensional branching random walk (see (\ref{465gth51}) below), which is crucial in arguing that the empty ball probability involved can be bounded above by a local extinction probability of the super-Brownian motion. By above arguments, we have completely confirmed and generalized this conjecture.
  \par

For our purposes of proving Theorem \ref{thdim2}, the following theorem gives the asymptotic behaviour of the maximal displacement, which improves the assumption of \cite[Theorem 1]{lalley2015}. Write $u \in Z_i$ if $u$ is a
particle at time $i$ in the branching random walk $\{Z_n\}_{n\geq0}$. Recall that $S_u$ is the position of particle $u$. Denote by $M:=\max_{n\geq0}\max_{u\in Z_n}S_u$ the maximal displacement of the branching random walk. Put $\eta^2=\mathbb{E}[X^2]$.

\begin{theorem} \label{thdimmaxplace}Assume $d=1$, $\sigma^2<\infty$, $\eta^2<\infty$ and $\mathbb{E}[(X^{+})^4]<\infty$. Then,
\begin{align}\label{45ghyh89j}
\lim_{x\to+\infty}x^2\mathbb{P}_{\delta_0}(M\geq x)=\frac{6\eta^2}{\sigma^2}.
\end{align}
\end{theorem}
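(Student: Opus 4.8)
The plan is to relate $q(x):=\mathbb{P}_{\delta_0}(M\geq x)$ to the range of the super-Brownian motion obtained as the diffusive scaling limit of $\{Z_n\}_{n\geq0}$, and to extract the constant from an explicit one-dimensional ODE. Decomposing at the first generation gives, for $x>0$, the identity $q(x)=1-f\bigl(1-\mathbb{E}[q(x-X)]\bigr)$ with $f(s)=\sum_{k\geq0}p_ks^k$ and $q\equiv1$ on $(-\infty,0]$; since $m=1$ one has $f(1-s)=1-s+\tfrac{\sigma^2}{2}s^2+o(s^2)$, so the formal continuum limit of the recursion is $\tfrac{\eta^2}{2}g''=\tfrac{\sigma^2}{2}g^2$. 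The constant $6\eta^2/\sigma^2$ is exactly the value $g(1)$ of the minimal nonnegative solution of this equation on $(0,\infty)$ with $g(0+)=\infty$: multiplying by $g'$ and integrating once (using $g,g'\to0$ at $+\infty$) yields $g(a)=6\eta^2/(\sigma^2a^2)$. Probabilistically, $g(a)=\mathbb{N}_0\bigl(\mathcal{R}\cap[a,\infty)\neq\emptyset\bigr)$, where $\mathcal{R}$ is the range and $\mathbb{N}_0$ the canonical (excursion) measure of the one-dimensional super-Brownian motion with spatial generator $\tfrac{\eta^2}{2}\Delta$ and branching mechanism $\tfrac{\sigma^2}{2}u^2$. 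The goal is thus $x^2q(x)\to g(1)$, which I would split into a soft lower bound and a more delicate upper bound.

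For the lower bound: by the invariance principle, with $N=x^2$ one has the convergence $x^2\,\mathbb{P}_{\delta_0}(\,\cdot\,)\to\mathbb{N}_0(\,\cdot\,)$ for the rescaled measure-valued process $t\mapsto x^{-2}Z_{\lfloor x^2t\rfloor}(x\,\cdot\,)$ — only $\mathbb{E}[X^2]<\infty$ is used, through Donsker's theorem for the spatial displacement together with the convergence of rescaled critical Galton--Watson trees. Since $\{M\geq x\}$ contains $\{\text{some particle born by generation }\lfloor Tx^2\rfloor\text{ lies in }(x,\infty)\}$, and $\omega\mapsto\sup_{t\leq T}\omega_t\bigl((1,\infty)\bigr)$ is lower semicontinuous on path space (a supremum of the lower semicontinuous maps $\mu\mapsto\mu(U)$, $U$ open), the portmanteau inequality for open sets gives $\liminf_{x\to\infty}x^2q(x)\geq\mathbb{N}_0\bigl(\mathcal{R}|_{[0,T]}\cap(1,\infty)\neq\emptyset\bigr)$; letting $T\uparrow\infty$ this increases to $\mathbb{N}_0(\mathcal{R}\cap[1,\infty)\neq\emptyset)=g(1)=6\eta^2/\sigma^2$. (The scenario in which a particle reaches $(x,\infty)$ in a single step contributes at most $x^2\,\mathbb{P}(X\geq x-O(1))=o(1)$ because $\mathbb{E}[X^2]<\infty$, so it may be discarded throughout and all relevant configurations stay bounded away from the trivial path.)

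For the upper bound, write $q(x)=q_T(x)+r_T(x)$, where $q_T(x)$ is the probability that $[x,\infty)$ is reached by generation $\lfloor Tx^2\rfloor$ and $r_T(x)$ the probability that the first entrance into $[x,\infty)$ occurs only afterwards. A late entrance forces $Z_{\lfloor Tx^2\rfloor}\neq\emptyset$, so $r_T(x)\leq\mathbb{P}_{\delta_0}(Z_{\lfloor Tx^2\rfloor}\neq\emptyset)=\tfrac{2}{\sigma^2\lfloor Tx^2\rfloor}(1+o(1))$ by Kolmogorov's classical asymptotics for critical Galton--Watson processes (which needs only $\sigma^2<\infty$), hence $\limsup_{x\to\infty}x^2r_T(x)\leq 2/(\sigma^2T)$. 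It remains to show $\limsup_{x\to\infty}x^2q_T(x)\leq\mathbb{N}_0\bigl(\mathcal{R}|_{[0,T]}\cap[1,\infty)\neq\emptyset\bigr)$; combining the two estimates and letting $T\to\infty$ then gives $\limsup_{x\to\infty}x^2q(x)\leq g(1)$, which with the lower bound completes the proof.

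The hard point, and the place where $\mathbb{E}[X^2]<\infty$ is both necessary and sufficient, is this last inequality: the functional $\mu\mapsto\sup\operatorname{supp}(\mu)$ is \emph{not} upper semicontinuous — a vanishingly small mass far out is invisible in the limit — so it cannot be read off directly from the weak convergence. As in the proof of Theorem~\ref{thdim2}, I would dominate the local hitting probability by a small-time maximum: partition $\{1,\dots,\lfloor Tx^2\rfloor\}$ into $\lfloor T/\varepsilon\rfloor$ blocks of length $\lfloor\varepsilon x^2\rfloor$, and bound the probability that the \emph{first} entrance into $[x,\infty)$ occurs in a given block by the conditional expected number of particles alive at the start of that block times $\mathbb{P}\bigl(\max_{k\leq\lfloor\varepsilon x^2\rfloor}S_k\geq(\text{remaining distance})\bigr)$; Kolmogorov's maximal inequality bounds this last probability by $\eta^2\lfloor\varepsilon x^2\rfloor/(\text{distance})^2$ using only the second moment of $X$. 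Summing over blocks, and controlling the ``invisible'' configurations via the a priori estimate $\sup_{x\geq1}x^2q(x)<\infty$ — itself obtained from the same block decomposition together with $\mathbb{P}_{\delta_0}(Z_n\neq\emptyset)\leq C/n$ — shows that their total contribution is $O(\varepsilon)$ uniformly in $x$, which upgrades the weak-convergence bound to the desired inequality after $\varepsilon\to0$. Everything else is standard: the ODE computation, the super-Brownian scaling limit, and the Galton--Watson survival asymptotics; it is precisely because the quantitative heart of the argument is Kolmogorov's inequality — not a Gaussian or exponential tail bound — that no moment of $X$ beyond the second is required, which is what makes the result optimal and improves \cite[Theorem~1]{lalley2015}.
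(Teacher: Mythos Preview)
Your route is genuinely different from the paper's, but the upper bound contains a real gap.

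\textbf{What the paper does.} The paper stays inside the Lalley--Shao framework: the nonlinear martingale of Lemma~\ref{454t67ji1}, the a~priori structure of Lemma~\ref{4656tuh1}, and then Proposition~\ref{465hyu1m} showing that every subsequential limit $\phi$ of $u(x+y/\sqrt{u(x)})/u(x)$ satisfies the Feynman--Kac identity (\ref{345454t63}). The only place where higher moments of $X$ were used by Lalley--Shao is the boundary term $u(W_{\tau-1}+x_k)/u(x_k)\to 1$, which they controlled via moments of the overshoot $W_{\tau-1}$. The paper's point is that this term can instead be handled by the functional CLT for the running maximum of a mean-zero random walk (\cite[Theorem~5.25]{peteryuval}) together with a crude second-moment tail for a single large jump, see (\ref{9034kgt})--(\ref{45juo91}); this needs only $\mathbb{E}[X^2]<\infty$. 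Uniqueness of the Feynman--Kac solution then pins down $\phi$ and the constant $6\eta^2/\sigma^2$.

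\textbf{Where your argument breaks.} Your upper bound, and your proposed proof of the a~priori estimate $\sup_{x}x^2q(x)<\infty$, are first-moment computations. Many-to-one on the first-passage stopping line plus Kolmogorov's inequality gives at best
\[
\mathbb{P}_{\delta_0}\!\big(M_n\geq x\big)\ \le\ \mathbb{P}\!\Big(\max_{k\le n}S_k\ge x\Big)\ \le\ \frac{n\eta^2}{x^2},
\]
which is vacuous on the relevant scale $n\asymp x^2$. In your block decomposition the same thing happens: on the ``invisible'' event the per-block bound is $\mathbb{E}[|Z_{\text{start}}|]\cdot \eta^2\varepsilon/\delta^2=\eta^2\varepsilon/\delta^2$, and summing $T/\varepsilon$ blocks gives a constant, so after multiplying by $x^2$ it diverges. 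The underlying reason is that for a recurrent walk the expected number of first-passage particles to level $x$ equals $\mathbb{P}(\tau_x<\infty)=1$; no linear (many-to-one) bound can see the $x^{-2}$ decay. That decay comes from the nonlinear term $H(s)\sim\tfrac{\sigma^2}{2}s$, which is precisely what the Lalley--Shao martingale encodes and what your argument never uses. Finally, invoking the small-time-maximum estimate ``as in the proof of Theorem~\ref{thdim2}'' is circular: that estimate is Lemma~\ref{566u9g13q}, whose proof (see (\ref{rtmjkio49b})) already inputs $\mathbf{P}(M\ge x)\le c_1x^{-2}$, i.e.\ Theorem~\ref{thdimmaxplace} itself.

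Your lower bound via portmanteau on the open event and the excursion-measure scaling limit is reasonable in outline, but to turn the sketch into a proof of the theorem you would still need an independent source for the a~priori bound and for the ``mass escaping to $x$ in a short-lived tree'' control; both are supplied in the paper by the martingale route, not by Kolmogorov's inequality.
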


\begin{remark}Lalley and Shao \cite[Theorem 1]{lalley2015} proved that if $\sum^{\infty}_{k=0}k^3p_k<\infty$, $\mathbb{E}[|X|^{4+\varepsilon}]<\infty$ for some $\varepsilon>0$ and $X$ is integer-valued, then (\ref{45ghyh89j}) holds. Moreover, they \cite[p. 76]{lalley2015} commented that $4-\varepsilon$ moments of $X$ are not enough for (\ref{45ghyh89j}).
\end{remark}
\begin{remark}
For the case of $\sigma^2=\infty$, Hou et al. \cite{houbernoulli} show that for a critical branching L\'evy process with $(1+\beta)$-stable branching mechanism (where $\beta\in(0,1)$), if $\mathbb{E}[(X^+)^r]<\infty$ for some $r>\frac{2(1+\beta)}{\beta}$ , then $x^{2/\beta}\mathbb{P}_{\delta_0}(M\geq x)$ converges to some positive constant as $x\to\infty$. If $\mathbb{E}[X^2]=\infty$, Lalley and Shao \cite{lalley2016} and Profeta \cite{Profeta22} proved that for a critical branching $\alpha$-stable process (where $\alpha\in(0,2)$) if $\sum^{\infty}_{k=0}k^3p_k<\infty$, then $x^{{\alpha/2}}\mathbb{P}_{\delta_0}(M\geq x)$ converges to some positive constant as $x\to\infty$.
\end{remark}
The rest of this paper is organised as follows. We first study the maximum of the 1-dimensional branching random walk in Section \ref{sectionthdim2}, where Theorem \ref{thdimmaxplace} is proved. Then, in Section \ref{45gttghy9hn3r}, we use the maximum of the branching random walk to prove Theorem \ref{thdim2}. Finally, the empty ball of the branching random walk whose offspring has infinite second moment is studied in Section \ref{8i87ujr4g}, where Theorem \ref{thdim4} is proved.
\section{Proof of Theorem \ref{thdimmaxplace}: Maximal Displacement}\label{sectionthdim2}

In this section, we are going to prove Theorem \ref{thdimmaxplace}. To this end, we should give some preliminary results.
In the sequel of this article, write $\mathbf{P}:=\mathbb{P}_{
\delta_0}$, $\mathbf{E}:=\mathbb{E}_{\delta_0}$ and $|Z_n|:=Z_n(\mathbb{R}^d)$ for short. We assume $d=1$ in the remainder of this section. Put
$$u(x):=\mathbf{P}(M\geq x)~\text{for}~x\in\mathbb{R}.$$
 For particle $\omega,v$, write $v\prec \omega$, if $v$ is an ancestor of $\omega$, and $v\preceq \omega$ if $v=\omega$ or $v\prec \omega$. Recall that $X_{\omega}$ is the displacement of particle $\omega$. Let $M_{\omega}:=\max_{v:\omega\preceq v}(S_v-S_{\omega})$ be the maximal displacement of the sub-branching random walk emanating from $\omega$. Note that $M_{\omega}$ and $M$ has the same distribution. Let $M^*$ be an i.i.d. copy of $M$ and independent of $X$. Then, by the Markov property, for $x>0$,
\begin{align}
1-u(x)&=\mathbf{P}\left(M<x\right)\cr
&=\mathbf{E}\left[\mathbf{P}(M< x, |Z_1|=0\Big{|}\sigma(|Z_1|))\right]+\mathbf{E}\left[\prod_{v\in Z_1}\mathbf{P}(M_v+X_v< x,X_v<x, |Z_1|>0\Big{|}\sigma(|Z_1|))\right]\cr
&=p_0\mathbf{P}\left(0<x\right)+\sum^{\infty}_{n=1}p_n\mathbf{E}\left[\mathbf{P}\left(M^*+X<x,X<x|X\right)^n\right]\cr
&=p_0+\sum^{\infty}_{n=1}p_n\mathbf{E}\left[(1-u(x-X))^n\ind_{\{X<x\}}\right]\cr
&=p_0+\sum^{\infty}_{n=1}p_n\mathbf{E}\left[(1-u(x-X))^n\right]\cr
&=\sum^{\infty}_{n=0}p_n\mathbf{E}\left[\big(1-u(x-X)\big)^n\right]\cr
&=\mathbf{E}[(1-u(x-X))^{|Z_1|}],\nonumber
\end{align}
where the $5$-th equality comes from $u(x)=1$ for $x\leq0$, and the $6$-th equality follows from the convention $0^0=1$. Let $Q(s):=1-\mathbf{E}[(1-s)^{|Z_1|}]$, $s\in[0,1]$. Above yields that
\begin{align}\label{45kopq1}
u(x)=Q(\mathbf{E}[u(x-X)]),~x>0.
\end{align}
Set
$$f(s):=\mathbf{E}[s^{|Z_1|}]=\sum_{k\geq0}p_ks^{k},~H(s):=[s-1+f(1-s)]/s, s\in[0,1],$$
where we define $f(0):=\lim_{s\to0+}f(s)=p_0$ and $H(0):=\lim_{s\to0+}H(s)=0.$

 The following lemma comes from \cite[Proposition 6]{lalley2015}. We note that the lattice assumption of \cite[Proposition 6]{lalley2015} can be removed. Let $\{W_n\}_{n\geq0}$ be a random walk with increment following the distribution of $-X$. In this paper, we always use $\mathbb{E}_{x}$ to denote the expectation with respect to $\mathbb{P}_{x}$, the probability measure under which the random walk $\{W_n\}_{\geq 0}$ (or the standard Brownian motion $\{B_t\}_{t\geq0}$) has initial value $W_0=x$ (or $B_0=x$). For ease of notation, we write $\mathbb{P}=\mathbb{P}_{0}.$ By the mean value theorem, there exists some $\xi\in(0,s)$ such that
 $$sH(s)=s-1+f(1-s)=\Big(1-\sum^{\infty}_{k=1}kp_k(1-\xi)^{k-1}\Big)s\leq s.$$
 Thus, we have $H(s)\in[0,1]$ for $s\in[0,1].$
\begin{lemma} \label{454t67ji1}For $x>0$, under $\mathbb{P}_x$, the process
\begin{align}
Y_n:=\Big[\prod^n_{j=1}\big(1-H(u(W_j))\big)\Big]u(W_n), n\geq1\nonumber
\end{align}
is a bounded martingale with respect to the natural filtration generated by the random
walk $\{W_n\}_{n\geq1}$.
\end{lemma}
\begin{proof}Since $Q(s)=s-sH(s)$, by (\ref{45kopq1}), we have
\begin{align}\label{45hy789q1}
u(x)=Q(\mathbf{E}[u(x-X)])&=\mathbf{E}[u(x-X)]-\mathbf{E}[u(x-X)]H(\mathbf{E}[u(x-X)])\cr
&=\mathbb{E}_x[u(W_1)]-\mathbb{E}_x[u(W_1)]H(\mathbb{E}_x[u(W_1)]).
\end{align}
One can replace \cite[(16)]{lalley2015} with (\ref{45hy789q1}) in the proof of \cite[Propostion 6]{lalley2015} to obtain Lemma \ref{454t67ji1}.
\end{proof}
Since $u$ is non-increasing, by the Bolzano-Weierstrass theorem \cite[Theorem 3.4.8]{Bartle2000}, for any $y\geq0$ there exist sequences $y_k\to\infty$ such that
\begin{align}
\lim_{k\to\infty}\frac{u(y_k+y/\sqrt{u(y_k)})}{u(y_k)}\in[0,1]~\text{exists}.\nonumber
\end{align}
By a diagonalization argument, we can find a sequence $\{x_k\}_{k\geq1}$ with $\lim_{k\to\infty}x_k=\infty$ such that for all rational $y\geq0$,
\begin{align}\label{4875623re}
\phi(y):=\lim_{k\to\infty}\frac{u(x_k+y/\sqrt{u(x_k)})}{u(x_k)}.
\end{align}

The following lemma comes from \cite[Proposition 8]{lalley2015} or \cite[Proposition 3.2]{houbernoulli}.
\begin{lemma}\label{4656tuh1} Assume $\mathbb{E}[X^2]<\infty$ and $\sigma^2<\infty.$ For any sequence $\{x_k\}_{k\geq1}$ with $\lim_{k\to\infty}x_k=\infty$ such that (\ref{4875623re}) holds for all rational $y\geq0$, the limit in (\ref{4875623re}) also exists for all $y\geq0$. Therefore, we can use (\ref{4875623re}) to define $\phi(y)$ for all $y\geq0$. Moreover, $\phi$ is continuous and the convergence (\ref{4875623re}) holds uniformly for $y$ in any compact interval of $[0,\infty)$.
\end{lemma}
\begin{remark}
One can follow the steps of  \cite[Propostion 3.2]{houbernoulli} and \cite[Proposition 8]{lalley2015} to prove Lemma \ref{4656tuh1}. Since the proofs mainly use Donsker's principle and the fact that $H(u)\sim\sigma^2u/2$ as $u\to0+$ (see (\ref{45kio12}) below), we point out that the assumptions $\mathbb{E}[X^2]<\infty$ and $\sigma^2<\infty$ are enough for Lemma \ref{4656tuh1}.
\end{remark}

The following proposition shows that any subsequential limit $\phi$ satisfies the following Feynman-Kac formula, which weakens assumptions $\sum^{\infty}_{k=0}k^3p_k<\infty$ and $\mathbb{E}[|X|^{4+\varepsilon}]<\infty$ in Lalley and Shao \cite[Propostion 9]{lalley2015}. The key point is that instead of using the moment method of the overshoot distribution like \cite{lalley2015}, we take advantage of the central limit theorem of the maximum of a random walk. Set $\tau^{BM}:=\inf\{t\geq0:B_t=0\}$. Recall that $X^+=\max\{X,0\}$, $\sigma^2=\mathbf{E}[|Z_1|^2]-1$ and $\eta^2=\mathbb{E}[X^2].$
\begin{proposition}\label{465hyu1m} Assume $\sigma^2<\infty$, $\eta^2<\infty$ and $\mathbb{E}[(X^+)^4]<\infty$. Then, any subsequential limit $\phi(y)$, $y\geq0$
satisfies
\begin{align}\label{345454t63}
\phi(y)=\mathbb{E}_{y/\eta}\Big[\exp\Big\{-\frac{\sigma^2}{2}\int^{\tau^{BM}}_0\phi(\eta B_t)dt\Big\}\Big].
\end{align}
\end{proposition}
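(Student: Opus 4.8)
Following the strategy of \cite[Proposition 9]{lalley2015}, the plan is to push the diffusive scaling limit through the martingale identity of Lemma~\ref{454t67ji1} and recognise the resulting weight as a Feynman--Kac functional. Fix $y>0$; the case $y=0$ is trivial, since both sides of \reff{345454t63} equal $1$. Let $\{x_k\}$ be a subsequence along which \reff{4875623re} holds locally uniformly (Lemma~\ref{4656tuh1}), set $\epsilon_k:=\sqrt{u(x_k)}$, and run $\{W_n\}$ (increments $\sim -X$, variance $\eta^2$) from $W_0=x_k+y/\epsilon_k$. Put $\tau_k:=\inf\{n\ge1:W_n\le x_k\}$, which is $\mathbb{P}_{x_k+y/\epsilon_k}$-a.s.\ finite because a centred finite-variance walk is recurrent. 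Optional stopping for the bounded martingale $Y_n$ at $\tau_k\wedge N$, followed by $N\to\infty$, gives
$$u\big(x_k+y/\epsilon_k\big)+o\big(u(x_k)\big)=\mathbb{E}_{x_k+y/\epsilon_k}\Big[\Big(\prod_{j=1}^{\tau_k}\big(1-H(u(W_j))\big)\Big)\,u(W_{\tau_k})\Big],$$
the $o(u(x_k))$ accounting for the lower-order discrepancy between $u(x_k+y/\epsilon_k)$ and $\mathbb{E}[Y_1]$ (non-negative by concavity of $Q$). Dividing by $u(x_k)=\epsilon_k^2$, the left side tends to $\phi(y)$ by \reff{4875623re}.

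For the right side, rescale: set $\widehat W^k_t:=\epsilon_k(W_{\lfloor t/\epsilon_k^2\rfloor}-x_k)$, so $\widehat W^k_0=y$. By Donsker's invariance principle $\widehat W^k\Rightarrow\eta B$ in $D[0,\infty)$ with $B$ a standard Brownian motion from $y/\eta$, and, the first-passage functional being a.s.\ continuous along Brownian paths, $\epsilon_k^2\tau_k\Rightarrow\tau^{BM}=\inf\{t\ge0:B_t=0\}$ jointly with the path convergence. For $1\le j<\tau_k$ one has $W_j>x_k$, hence $u(W_j)\le\epsilon_k^2\to0$ uniformly; writing $\log(1-H(u(W_j)))=-H(u(W_j))+O(H(u(W_j))^2)$ and using the asymptotics $H(s)\sim\tfrac{\sigma^2}{2}s$ as $s\to0^+$ together with the locally uniform convergence $u(x_k+z/\epsilon_k)/u(x_k)\to\phi(z)$ on $[0,\infty)$ and $\epsilon_k(W_j-x_k)=\widehat W^k_{j\epsilon_k^2}$, the sum $\sum_{j=1}^{\tau_k}H(u(W_j))$ becomes an occupation-time Riemann sum converging to $\tfrac{\sigma^2}{2}\int_0^{\tau^{BM}}\phi(\eta B_t)\,dt$, so that $\prod_{j=1}^{\tau_k}(1-H(u(W_j)))\to\exp\{-\tfrac{\sigma^2}{2}\int_0^{\tau^{BM}}\phi(\eta B_t)\,dt\}$. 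For the boundary factor, $W_{\tau_k}=x_k-\varrho_k$ with an overshoot $\varrho_k\ge0$ that is tight (as $\mathbb{E}|X|<\infty$), and the locally uniform convergence of Lemma~\ref{4656tuh1}, extended slightly to the left of $0$, forces $u(W_{\tau_k})/u(x_k)\to\phi(0)=1$ whenever $\varrho_k$ stays bounded. Splitting the expectation according to whether $\varrho_k\le M$ or not, passing to the limit on the bounded part, and then letting $M\to\infty$ yields \reff{345454t63}.

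I expect the main obstacle to be the last splitting: one must show that the contribution of a large overshoot, namely $\mathbb{E}_{x_k+y/\epsilon_k}[\mathbf{1}_{\varrho_k>M}\,(\cdot)\,u(W_{\tau_k})]/u(x_k)$, is negligible uniformly in $k$ as $M\to\infty$. Since $u(W_{\tau_k})=u(x_k-\varrho_k)$ can be of order $1$ when $\varrho_k$ is of order $x_k$, this requires both an a priori two-sided bound $u(x)\asymp x^{-2}$ (to be established beforehand) and a quantitative estimate on the tail of the first-passage overshoot of $\{W_n\}$, for which the bound $\mathbb{P}(|X|>t)=o(t^{-2})$ implied by $\mathbb{E}[X^2]<\infty$ is precisely what is needed --- this is where the (optimal) moment hypothesis enters in an essential way. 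Once this uniform negligibility is secured, the remaining pieces --- Donsker's theorem, convergence of the first-passage time, the occupation-time limit, and $H(s)\sim\tfrac{\sigma^2}{2}s$ --- combine routinely; the conceptual heart of the argument is merely that the subcriticality correction $H(u(W_j))$ acts as a position-dependent killing rate $\tfrac{\sigma^2}{2}u$ along the rescaled walk, which upon summation produces the Feynman--Kac exponential.
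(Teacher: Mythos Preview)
Your overall plan---push the martingale identity of Lemma~\ref{454t67ji1} through a diffusive rescaling and identify the occupation-time sum as the Feynman--Kac exponential via Donsker and $H(s)\sim\tfrac{\sigma^2}{2}s$---matches the paper's. The gap is in the boundary term. You stop at the crossing time $\tau_k$, so the factor is $u(W_{\tau_k})/u(x_k)=u(x_k-\varrho_k)/u(x_k)\ge 1$, and you then need two inputs you do not have: (a) the locally uniform convergence of Lemma~\ref{4656tuh1} ``extended slightly to the left of $0$''---but that lemma is proved only for $y\ge0$ precisely because on that side the ratio is bounded by $1$, and no such bound is available for $y<0$; (b) an a~priori two-sided estimate $u(x)\asymp x^{-2}$ to control the contribution of large overshoot. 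The latter is circular: in this paper the bound $u(x)\asymp x^{-2}$ is the content of Corollary~\ref{34gtgt3} and Theorem~\ref{thdimmaxplace}, which are deduced \emph{from} Proposition~\ref{465hyu1m}. Under the optimal hypotheses $\mathbb{E}[X^2]<\infty$, $\mathbf{E}[|Z_1|^2]<\infty$ there is no known independent route to such bounds; Lalley--Shao obtain them only under $\mathbb{E}[|X|^{4+\varepsilon}]<\infty$.

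The paper sidesteps the overshoot entirely by evaluating the martingale one step \emph{before} crossing: the boundary factor is $u(W_{\tau-1}+x_k)/u(x_k)$ with $W_{\tau-1}>0$, hence this ratio is always $\le1$ and the issue reduces to showing it is close to $1$ with high probability. On the event $\{W_{\tau-1}\le\lambda u(x_k)^{-1/2}\}$ this follows from Lemma~\ref{4656tuh1} at $y=\lambda$ (no extension needed), letting $\lambda\downarrow0$. The complementary event $\{W_{\tau-1}>\lambda u(x_k)^{-1/2}\}$ is split into $\{\tau>t_k\}$, controlled by the functional CLT for the running maximum \cite[Theorem~5.25]{peteryuval} (this is where the finite-variance hypothesis on $X$ enters), and the event that some increment among the first $t_k$ exceeds $\lambda u(x_k)^{-1/2}$, controlled by $t_k\,\mathbb{E}[X^2\ind_{\{X>\lambda u(x_k)^{-1/2}\}}]/(\lambda^2 u(x_k)^{-1})\to0$. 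Replacing your stopping time $\tau_k$ by $\tau_k-1$ and arguing as above removes the need for any a~priori estimate on $u$.
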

\begin{proof}
 Since $\frac{e^{-x}-1+x}{x^2}\in(0,1/2)$ for $x>0$ and $\mathbf{E}[|Z_1|^2]<\infty$, by the dominated convergence theorem, we have
\begin{align}\label{4fgt6ye3}
\lim_{s\to0+}\frac{\mathbf{E}[e^{-s|Z_1|}]-1+\mathbf{E}[|Z_1|]s}{s^2}
=\mathbf{E}\Big[\lim_{s\to0+}\frac{e^{-s|Z_1|}-1+s|Z_1|}{s^2}\Big]=\frac{\mathbf{E}[|Z_1|^2]}{2},
\end{align}
where the last equality follows from the fact that $e^{-x}=1-x+x^2/2+o(x^2)$ as~$x\to0$. Let $s=-\log(1-r)$, it follows that
\begin{align}\label{45kio12}
\lim_{r\to0+}\frac{H(r)}{r}
&=\lim_{r\to0+}\frac{\mathbf{E}[(1-r)^{|Z_1|}+r-1]}{r^2}\cr
&=\lim_{r\to0+}\frac{\mathbf{E}[e^{{|Z_1|}\log(1-r)}+r-1]}{r^2}\cr
&=\lim_{s\to0+}\frac{\mathbf{E}[e^{-s{|Z_1|}}-e^{-s}]}{(1-e^{-s})^2}\cr
&=\lim_{s\to0+}\frac{\mathbf{E}[e^{-s{|Z_1|}}-1+s]}{s^2}+\lim_{s\to0+}\frac{1-s-e^{-s}}{s^2}\cr
&=\frac{\sigma^2}{2},
\end{align}
 where the last equality follows from (\ref{4fgt6ye3}). Let $\{x_k\}_{k\geq1}$ be the sequence defined in Lemma \ref{4656tuh1}. Set $z_k=x_k+\lambda u(x_k)^{-1/2}$ with $\lambda>0$. For $k\geq1$ and $j\geq0$, put
\begin{align}\label{34kpq21}
g_{k,j}&:=\left[\log[1-H(u(W_j+z_k))]+\frac{\sigma^2}{2}u(W_j+z_k)\right]/u(x_k)\cr
&=\left[\frac{\log[1-H(u(W_j+z_k))]}{H(u(W_j+z_k))}\frac{H(u(W_j+z_k))}{u(W_j+z_k)}+\frac{\sigma^2}{2}\right]\frac{u(W_j+z_k)}{u(x_k)}.
\end{align}
By (\ref{45kio12}), we have
\begin{align}
\lim_{r\to0+}\frac{\log[1-H(r)]}{H(r)}\frac{H(r)}{r}=-\frac{\sigma^2}{2}.
\end{align}
Thus, for any $\varepsilon>0$, there exists $\delta>0$ such that for $r\in(0,\delta)$,
\begin{align}\label{4hyhyhke3}
\left|\frac{\log[1-H(r)]}{H(r)}\frac{H(r)}{r}+\frac{\sigma^2}{2}\right|\leq \varepsilon.
\end{align}
Since $\lim_{k\to\infty}u(x_k)=0,$ there exists a non-random integer $N>0$ such that for $k>N$ and $j$ satisfying $W_j\geq 0$, it follows that
\begin{align}
0\leq u(W_j+z_k)=u(W_j+x_k+\lambda u(x_k)^{-1/2})\leq u(x_k)\leq \delta.
\end{align}
By (\ref{4hyhyhke3}), for $k>N$ and $j$ satisfying $W_j\geq 0$, we have
\begin{align}\label{45kopw}
|g_{k,j}|&\leq \Bigg{|}\frac{\log[1-H(u(W_j+z_k))]}{H(u(W_j+z_k))}\frac{H(u(W_j+z_k))}{u(W_j+z_k)}+\frac{\sigma^2}{2}\Bigg{|}\cr
&\leq\varepsilon,
\end{align}
where the first inequality follows from $0\leq u(W_j+z_k)\leq u(x_k)$.
\par

By the law of the iterated logarithm \cite[Theorem 5.17]{peteryuval}, one can set $\tau:=\min\{i\geq 0:W_i\leq 0\}<\infty$. This definition implies that
\begin{align}
W_{i}>0,~0\leq i\leq {\tau-1}~\text{and}~W_{\tau}\leq0.\nonumber
\end{align}
Let
\begin{align}\label{56hujuju78w2}
t_k:=\lfloor h_ku^{-1}(x_k)\rfloor~\text{with}~ h_k:=1/(\mathbb{E}[(X^+)^4\ind_{\{X\geq \lambda u(x_k)^{-1/2}\}}])^{1/2},
\end{align}
where $\lfloor x\rfloor$ stands for the largest integer not exceeding $x$. Note that if $\text{ess sup} X<\infty$, then $h_k=t_k=\infty$ for large $k$. Since (\ref{345454t63}) holds trivially for $y=0$, in the remainder of this proof, we assume $y>0$. Similarly to \cite[(22)]{lalley2015}, by Lemma \ref{454t67ji1} and Doob's optional stopping time theorem, we have
\begin{align}\label{345yu71}
&\frac{1}{u(x_k)}u(x_k+yu(x_k)^{-1/2}+\lambda u(x_k)^{-1/2})\cr
&=\mathbb{E}_{yu(x_k)^{-1/2}}\Bigg[\exp\Big\{\sum^{\tau\wedge t_k}_{j=1}\log[1-H(u(W_j+z_k))]\Big\}\frac{u(W_{\tau\wedge t_k}+z_k)}{u(x_k)}\Bigg]\cr
&=\mathbb{E}_{yu(x_k)^{-1/2}}\Bigg[\exp\Big\{-\frac{\sigma^2}{2}\sum^{\tau\wedge t_k}_{j=1}u(W_j+z_k)+u(x_k)\sum^{\tau\wedge t_k}_{j=1} g_{k,j}\Big\}\frac{u(W_{\tau\wedge t_k}+z_k)}{u(x_k)}\Bigg],
\end{align}
where the last equality follows from (\ref{34kpq21}).
\par

In next, we are going to show that
\begin{align}\label{4656juo9}
\lim_{\lambda\to 0+}\limsup_{k\to\infty}\mathbb{E}_{yu(x_k)^{-1/2}}\Big[\Big{|}\frac{u(W_{\tau\wedge t_k}+z_k)}{u(x_k)}-1\Big{|}\Big]=0.
\end{align}
We first consider the case that $\text{ess sup} X=\infty$, which implies that $\{h_k\}_{k\geq1},\{t_k\}_{k\geq1}$ are finite sequences.
Since on the event $\{\tau>t_k\}$, we have $W_{t_k}>0$. Thus,
\begin{align}
\Big{|}\frac{u(W_{\tau\wedge t_k}+z_k)}{u(x_k)}-1\Big{|}\leq \frac{u(W_{t_k}+z_k)}{u(x_k)}+1\leq 2.\nonumber
\end{align}
Above yields that
\begin{align}\label{4frgt67u7}
&\mathbb{E}_{yu(x_k)^{-1/2}}\Big[\Big{|}\frac{u(W_{\tau\wedge t_k}+z_k)}{u(x_k)}-1\Big{|}\ind_{\{\tau>t_k\}}\Big]\cr
&\leq 2\mathbb{P}_{yu(x_k)^{-1/2}}\left(\tau> t_k\right)\cr
&= 2\mathbb{P}_{yu(x_k)^{-1/2}}\Big(\inf_{i\leq t_k}W_i\geq 0\Big).
\end{align}
Recall that $\eta^2=\mathbb{E}[X^2]$. Let $\{S_n\}_{n\geq0}$ be a random walk with increment $X$. Since $\lim_{k\to\infty}h_k=\infty$ and $\lim_{k\to\infty}u(x_k)=0$, we have
\begin{align}\label{54hyui2w}
\lim_{k\to\infty}(h_ku^{-1}(x_k)-1)u(x_k)=\lim_{k\to
\infty}h_k-u(x_k)=\infty.
\end{align}
By \cite[Theorem 5.25]{peteryuval}, we have
\begin{align}\label{9034kgt}
\lim_{k\to\infty}\mathbb{P}_{yu(x_k)^{-1/2}}\left(\inf_{i\leq t_k}W_i\geq 0\right)
&=\lim_{k\to\infty}\mathbb{P}_{0}\left(\max_{i\leq t_k}S_i\leq yu(x_k)^{-1/2}\right)\cr
&=\lim_{k\to\infty}\mathbb{P}_{0}\left(\frac{\max_{i\leq t_k}S_i}{\sqrt{t_k}}\leq \frac{yu(x_k)^{-1/2}}{\sqrt {\lfloor h_ku^{-1}(x_k)\rfloor}}\right)\cr
&\leq\lim_{k\to\infty}\mathbb{P}_{0}\left(\frac{\max_{i\leq t_k}S_i}{\sqrt{t_k}}\leq \frac{y}{\sqrt { (h_ku^{-1}(x_k)-1)u(x_k)}}\right)\cr
&=0,
\end{align}
where the last equality has used (\ref{54hyui2w}). This, combined with (\ref{4frgt67u7}), implies that
\begin{align}\label{45ghy56hy1}
\lim_{k\to\infty}\mathbb{E}_{yu(x_k)^{-1/2}}\Big[\Big{|}\frac{u(W_{\tau\wedge t_k}+z_k)}{u(x_k)}-1\Big{|}\ind_{\{\tau>t_k\}}\Big]=0.
\end{align}
\par
Now, we are going to deal with ${\tau\leq t_k}$. Put
$$A:=\{W_{\tau}\geq -\lambda u(x_k)^{-1/2}\}.$$
Since $W_{\tau}\leq0$, on the event $A\cap\{\tau\leq t_k\}$, we have
\begin{align}
\frac{u(x_k+\lambda u(x_k)^{-1/2})}{u(x_k)}\leq\frac{u(W_{\tau\wedge t_k}+z_k)}{u(x_k)}\leq 1.\nonumber
\end{align}
This, combined with Lemma \ref{4656tuh1}, implies that
\begin{align}\label{5657juol5}
\lim_{\lambda\to 0+}\limsup_{k\to\infty}\mathbb{E}_{yu(x_k)^{-1/2}}\Big[\Big{|}\frac{u(W_{\tau\wedge t_k}+z_k)}{u(x_k)}-1\Big{|}\ind_{A\cap\{\tau\leq t_k\}}\Big]=0.
\end{align}
We proceed to deal with the event $A^c\cap\{\tau\leq t_k\}$. Since $0\leq u(x)\leq 1$, we have
\begin{align}\label{4512psm}
\Big{|}\frac{u(W_{\tau\wedge t_k}+z_k)}{u(x_k)}-1\Big{|}\leq \frac{1}{u(x_k)}.
\end{align}
By the Markov inequality, it follows that
\begin{align}\label{56yghy67y}
\mathbb{P}(\exists 1\leq i\leq t_k, X_i\geq \lambda u(x_k)^{-1/2})&\leq  \frac{t_k\mathbb{E}[(X^+)^4\ind_{\{X\geq \lambda u(x_k)^{-1/2}\}}]}{\lambda^4u(x_k)^{-2}}\cr
&\leq \frac{h_k\mathbb{E}[(X^+)^4\ind_{\{X\geq \lambda u(x_k)^{-1/2}\}}]}{\lambda^4u(x_k)^{-1}}.
\end{align}
Observe that since $W_{\tau-1}\geq 0$, if $A^c=\{W_{\tau}<-\lambda u(x_k)^{-1/2}\}$ and $\tau\leq t_k$ holds simultaneously, then there must exists $1\leq i\leq t_k$ such that $-X_i\leq -\lambda u(x_k)^{-1/2}$.
This, combined with (\ref{4512psm}), implies that
\begin{align}\label{567opw2}
&\mathbb{E}_{yu(x_k)^{-1/2}}\Big[\Big{|}\frac{u(W_{\tau\wedge t_k}+z_k)}{u(x_k)}-1\Big{|}\ind_{A^c\cap\{\tau\leq t_k\}}\Big]\cr
&\leq\frac{1}{u(x_k)}\mathbb{P}_{yu(x_k)^{-1/2}}({A^c}, \tau\leq t_k)\cr
&\leq\frac{1}{u(x_k)}\mathbb{P}(\exists 1\leq i\leq t_k, -X_i\leq -\lambda u(x_k)^{-1/2})\cr
&=\frac{1}{u(x_k)}\mathbb{P}(\exists 1\leq i\leq t_k, X_i\geq \lambda u(x_k)^{-1/2})\cr
&\leq \frac{1}{\lambda^4}(\mathbb{E}[(X^+)^4\ind_{\{X\geq \lambda u(x_k)^{-1/2}\}}])^{1/2},
\end{align}
where the last inequality follows from (\ref{56hujuju78w2}) and (\ref{56yghy67y}). Since $\mathbb{E}[(X^+)^4]<\infty$, by the dominated convergence theorem, (\ref{567opw2}) yields that
\begin{align}\label{45juo91}
\lim_{k\to\infty}\mathbb{E}_{yu(x_k)^{-1/2}}\Big[\Big{|}\frac{u(W_{\tau\wedge t_k}+z_k)}{u(x_k)}-1\Big{|}\ind_{A^c\cap\{\tau\leq t_k\}}\Big]=0.
\end{align}
This, combined with (\ref{45ghy56hy1}) and (\ref{5657juol5}), concludes (\ref{4656juo9}).
\par
Now, we are going to prove (\ref{4656juo9}) under the case that $\text{ess sup} X<\infty$. For $k$ large enough such that  $\lambda u(x_k)^{-1/2}\geq \text{ess sup} X$, since $W_{\tau-1}\geq0$, the event $A=\{W_{\tau}\geq -\lambda u(x_k)^{-1/2}\}$ happens almost surely. This, combined with the fact that $t_k=\infty$ for large $k$, yields that
\begin{align}
&\lim_{\lambda\to 0+}\limsup_{k\to\infty}\mathbb{E}_{yu(x_k)^{-1/2}}\Big[\Big{|}\frac{u(W_{\tau\wedge t_k}+z_k)}{u(x_k)}-1\Big{|}\Big]\cr
&=\lim_{\lambda\to 0+}\limsup_{k\to\infty}\mathbb{E}_{yu(x_k)^{-1/2}}\Big[\Big{|}\frac{u(W_{\tau\wedge t_k}+z_k)}{u(x_k)}-1\Big{|}\ind_{A\cap\{\tau\leq t_k\}}\Big]=0,
\end{align}
where the last equality follows from (\ref{5657juol5}). We have finished the prove of (\ref{4656juo9}).
\par
By Donsker's invariance principle, for any $z\geq0$, we have
\begin{align}\label{456unjji89de}
\lim_{x\to+\infty}\mathbb{P}_{{y}/{\sqrt {u(x)}}}(\tau u(x)\leq z)=\mathbb{P}_{y}(\tau^{BM}\leq z).
\end{align}
Thus, under the probability $\mathbb{P}_{yu(x_k)^{-1/2}}$, we have
\begin{align}\label{456jiop1}
&\sum^{\tau\wedge t_k}_{j=1}u(W_j+z_k)\cr
&=\sum_{i=u(x_k),2u(x_k),...,(\tau\wedge t_k) u(x_k)}\frac{1}{u(x_k)}u\left(\frac{\sqrt{u(x_k)}W_{\frac{i}{u(x_k)}}}{\sqrt{u(x_k)}}+\lambda u(x_k)^{-1/2}+x_k\right)u(x_k)\cr
&\overset{\text{Law}}\to\int^{\tau^{BM}}_{0}\phi(\eta B_t+\lambda)dt~\text{as}~k\to\infty.
\end{align}
Intuitively, the convergence above is a consequence of Lemma \ref{4656tuh1}, Donsker's invariance principle and Riemann integral approximation. The proof of (\ref{456jiop1}) is almost the same as \cite[Lemma 3.3]{houbernoulli}. We omit the details here. It is simple to see that
\begin{align}\label{4tgty6h6}
\Big{|}u(x_k)\sum^{\tau \wedge t_k}_{j=1} g_{k,j}\Big{|}\leq u(x_k)\sum^{\tau -1}_{j=1} \Big{|}g_{k,j}\Big{|}+\Big{|}\log[1-H(u(W_{\tau\wedge t_k}+z_k))]+\frac{\sigma^2}{2}u(W_{\tau\wedge t_k}+z_k)\Big{|}.
\end{align}
By (\ref{45kopw}) and (\ref{456unjji89de}), under the probability $\mathbb{P}_{yu(x_k)^{-1/2}}$, the first term on the r.h.s. of (\ref{4tgty6h6}) converges in probability to $0$ as $k\to\infty$. Since $\lim_{k\to\infty}u(x_k)=0$, from (\ref{4656juo9}), we have
\begin{align}
&\lim_{k\to\infty}\mathbb{E}_{yu(x_k)^{-1/2}}[u(W_{\tau\wedge t_k}+z_k)]=0.\nonumber
\end{align}
This, combined with (\ref{45kio12}), yields that under the probability $\mathbb{P}_{yu(x_k)^{-1/2}}$, the second term on the r.h.s. of (\ref{4tgty6h6}) converges in probability to $0$ as $k\to\infty$. Hence, under the probability $\mathbb{P}_{yu(x_k)^{-1/2}}$, we have as $k\to\infty$,
\begin{align}\label{4546hyu91}
u(x_k)\sum^{\tau \wedge t_k}_{j=1} g_{k,j}\overset{\text{Law}}\to0.
\end{align}
By (\ref{34kpq21}), (\ref{456jiop1}) and (\ref{4546hyu91}), we have
\begin{align}\label{ervguy345th}
\sum^{\tau\wedge t_k}_{j=1}\log[1-H(u(W_j+z_k))]&=-\frac{\sigma^2}{2}\sum^{\tau\wedge t_k}_{j=1}u(W_j+z_k)+u(x_k)\sum^{\tau\wedge t_k}_{j=1} g_{k,j}\cr
&\overset{\text{Law}}\to-\frac{\sigma^2}{2}\int^{\tau^{BM}}_{0}\phi(\eta B_t+\lambda)dt~\text{as}~k\to\infty.
\end{align}
\par
From (\ref{345yu71}), we have
\begin{align}\label{45gtyty781}
&\frac{1}{u(x_k)}u(x_k+yu(x_k)^{-1/2}+\lambda u(x_k)^{-1/2})\cr
&=\mathbb{E}_{yu(x_k)^{-1/2}}\Bigg[\exp\Big\{\sum^{\tau\wedge t_k}_{j=1}\log[1-H(u(W_j+z_k))]\Big\}\Bigg]+\cr
&~~~~\mathbb{E}_{yu(x_k)^{-1/2}}\Bigg[\exp\Big\{\sum^{\tau\wedge t_k}_{j=1}\log[1-H(u(W_j+z_k))]\Big\}\Big(\frac{u(W_{\tau\wedge t_k}+z_k)}{u(x_k)}-1\Big)\Bigg].
\end{align}
Since $\sum^{\tau\wedge t_k}_{j=1}\log[1-H(u(W_j+z_k))]\leq 0$, by (\ref{4656juo9}), the second term on the r.h.s. of (\ref{45gtyty781}) converges to $0$ as $k\to\infty$ and $\lambda\to 0+$. Taking the limits in $k\to\infty$ first and then $\lambda\to0+$ in (\ref{45gtyty781}), by Lemma \ref{4656tuh1} and (\ref{ervguy345th}), we have
\begin{align}
\phi(y)=\mathbb{E}_{y/\eta}\Big[\exp\Big\{-\frac{\sigma^2}{2}\int^{\tau^{BM}}_0\phi(\eta B_t)dt\Big\}\Big].\nonumber
\end{align}
We have finished the proof of the proposition.
\end{proof}
Since there is only one solution to equation (\ref{345454t63}) (see \cite[Corollary 12]{lalley2015} and \cite[Corollary 3.5]{houbernoulli}), by Lemma \ref{4656tuh1} and Proposition \ref{465hyu1m}, we have the following corollary.
\begin{corollary} \label{34gtgt3}Assume $\sigma^2<\infty$, $\eta^2<\infty$ and $\mathbb{E}[(X^+)^4]<\infty$. Then, for any $y\geq0$,
$$
\lim_{x\to\infty}\frac{u(x+y/\sqrt{u(x)})}{u(x)}=\Big(1+\frac{\sigma y}{\sqrt 6\eta}\Big)^{-2}=\phi(y).
$$

\end{corollary}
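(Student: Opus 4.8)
\textbf{Proof proposal for Corollary \ref{34gtgt3}.}

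The plan is to exploit the fact that the functional equation \reff{345454t63} has a unique solution in the class of subsequential limits $\phi$ produced by Lemma \ref{4656tuh1}, and then to verify by direct substitution that $\psi(y):=(1+\sigma y/(\sqrt 6\eta))^{-2}$ solves it. First I would observe that, thanks to Proposition \ref{465hyu1m}, every subsequential limit $\phi$ satisfies the Feynman--Kac identity
\begin{align}
\phi(y)=\mathbb{E}_{y/\eta}\Big[\exp\Big\{-\frac{\sigma^2}{2}\int^{\tau^{BM}}_0\phi(\eta B_t)\,dt\Big\}\Big],\nonumber
\end{align}
while Lemma \ref{4656tuh1} guarantees that $\phi$ is continuous with $\phi(0)=1$ and $0\le \phi\le 1$. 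Since \cite[Corollary 12]{lalley2015} (equivalently \cite[Corollary 3.5]{houbernoulli}) asserts uniqueness of a bounded continuous solution to this equation with $\phi(0)=1$, it suffices to produce one explicit solution; by Lemma \ref{4656tuh1} the convergence in \reff{4875623re} then holds along the full family $x\to\infty$ (not just a subsequence), giving the stated limit.

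Next I would convert the Feynman--Kac equation into an ODE boundary value problem. By the standard Feynman--Kac correspondence, if $\phi$ is the stated expectation then $v(z):=\phi(\eta z)$ is the bounded solution on $(0,\infty)$ of
\begin{align}
\frac12 v''(z)=\frac{\sigma^2}{2}\,v(z)^2\,v(z)\quad\text{— more precisely}\quad \frac12 v''(z)-\frac{\sigma^2}{2}v(z)\cdot v(z)=0,\nonumber
\end{align}
with $v(0)=1$ and $v$ bounded (hence $v(z)\to 0$ as $z\to\infty$). Writing everything back in the variable $y=\eta z$, one checks that $\phi$ must satisfy
\begin{align}
\frac{\eta^2}{2}\phi''(y)=\frac{\sigma^2}{2}\phi(y)^2,\qquad \phi(0)=1,\qquad \lim_{y\to\infty}\phi(y)=0.\nonumber
\end{align}
(Here the $\phi^2$ rather than $\phi^3$ comes from the single factor $\phi$ inside the exponent of the Feynman--Kac formula combined with one more factor coming from differentiating the hitting functional; I would present this step carefully since it is where the precise power is pinned down.) This is a separable second-order ODE: multiplying by $\phi'$ and integrating gives $\frac{\eta^2}{4}(\phi')^2=\frac{\sigma^2}{6}\phi^3+C$, and the boundary condition at infinity forces $C=0$, whence $\phi'=-\sqrt{2\sigma^2/(3\eta^2)}\,\phi^{3/2}$ (negative root since $\phi$ is non-increasing). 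Integrating this first-order equation with $\phi(0)=1$ yields exactly $\phi(y)=(1+\sigma y/(\sqrt 6\,\eta))^{-2}$.

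I expect the main obstacle to be the rigorous passage from the probabilistic Feynman--Kac identity to the ODE — in particular justifying that $\phi$ is $C^2$ and that the boundary behaviour at $\infty$ is indeed $\phi(\infty)=0$ rather than some other bounded value, and tracking the exact constants (the factor $6$) through the change of variables $y\mapsto y/\eta$. An alternative route that sidesteps the regularity issue is to plug the candidate $\psi(y)=(1+\sigma y/(\sqrt6\eta))^{-2}$ directly into the right-hand side of \reff{345454t63} and verify the identity: with $v(z)=\psi(\eta z)$ one has $v''=\sigma^2 v^2/\eta^2$, so $M_t:=\exp\{-\tfrac{\sigma^2}{2}\int_0^t v(B_s)\,ds\}\,w(B_t)$ for a suitable companion function makes $v(B_{t\wedge\tau^{BM}})\exp\{-\tfrac{\sigma^2}{2}\int_0^{t\wedge\tau^{BM}}v(B_s)ds\}$ a bounded martingale, and optional stopping at $\tau^{BM}$ (using $v(0)=1$, boundedness, and $\tau^{BM}<\infty$ a.s.) recovers $\psi(y)=\mathbb{E}_{y/\eta}[\exp\{-\tfrac{\sigma^2}{2}\int_0^{\tau^{BM}}\psi(\eta B_t)dt\}]$. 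Either way, uniqueness then finishes the proof, and combined with the uniform convergence statement in Lemma \ref{4656tuh1} it upgrades the subsequential limit to a genuine limit as $x\to\infty$.
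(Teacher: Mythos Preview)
Your proposal is correct and follows exactly the paper's route: the paper's entire proof reads ``Since there is only one solution to equation \reff{345454t63} (see \cite[Corollary 12]{lalley2015} and \cite[Corollary 3.5]{houbernoulli}), by Proposition \ref{465hyu1m} and Lemma \ref{4656tuh1}, we have the following corollary,'' and that is precisely your strategy of combining the Feynman--Kac identity, uniqueness, and the subsequential-limit lemma. Your extra work --- reducing \reff{345454t63} to the ODE $\eta^2\phi''=\sigma^2\phi^2$ with $\phi(0)=1$, $\phi(\infty)=0$, and solving it explicitly to get $(1+\sigma y/(\sqrt6\eta))^{-2}$ --- simply unpacks what \cite[Corollary 12]{lalley2015} already contains, so nothing is genuinely different; just clean up the stray $v(z)^2\,v(z)$ typo in your first displayed ODE (you immediately correct it to $v^2$ anyway).
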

{\textbf{Proof of Theorem \ref{thdimmaxplace}.}} Now, with Corollary \ref{34gtgt3} in hand, Theorem \ref{thdimmaxplace} follows by using exactly the same steps as those of \cite[p. 646-647]{houbernoulli} or \cite[p. 85-86]{lalley2015}.

Theorem \ref{thdimmaxplace} assumes the offspring has finite variance. If the offspring law has infinite variance, then we have the following results.
\begin{corollary}\label{4gtty6y675} Suppose that $\lim_{n\to\infty}n^{1+\beta}\mathbf{P}(|Z_1|\geq n)=\kappa(\beta)\in(0,\infty)$ with $\beta\in(0,1)$, $\eta^2<\infty$ and $\mathbb{E}[(X^+)^{2+\frac{2}{\beta}}]<\infty$. Then,
\begin{align}
\lim_{x\to\infty}x^{\frac{2}{\beta}}\mathbf{P}(M\geq x)=\left(\frac{(\beta+2)\eta^2}{\beta\kappa(\beta)\Gamma(1-\beta)}\right)^{\frac{1}{\beta}}.
\end{align}
\end{corollary}
\begin{proof}From \cite[Lemma 3.1]{houbernoulli}, we have
$\lim_{r\to0+}\frac{H(r)}{r^{\beta}}=\frac{\kappa(\beta)\Gamma(1-\beta)}{\beta}.$ Now, the proofs are almost the same as those of Theorem \ref{thdimmaxplace}.
The main differences are to replace $\sqrt{u(x)}$ with $u(x)^{-\beta/2}$ and replace $\frac{\sigma^2}{2}$ with $\frac{\kappa(\beta)\Gamma(1-\beta)}{\beta}$.
\end{proof}

The following lemma considers the tail probability of the maximum of the branching random walk at time $n$, which is crucial in studying the empty ball problem. The lemma is similar to that of the branching L\'evy process \cite[Lemma 3.5]{hou2024}. However, different from \cite[Lemma 3.5]{hou2024}, our bounds are universal w.r.t. $n$ and assumptions are weaker. Recall that $S_u$ is the position of particle $u$.
\begin{lemma}\label{566u9g13q} Let $M_n:=\max_{u\in Z_n}S_u.$ (i) Suppose that $\lim_{n\to\infty}n^{1+\beta}\mathbf{P}(|Z_1|\geq n)=\kappa(\beta)\in(0,\infty)$ with $\beta\in(0,1)$ and $\mathbb{E}[(X^+)^{2+\frac{2}{\beta}}]<\infty$. Let $C_{\beta}>0$ be the constant defined in (\ref{454tyhgty5}) below. Then, for any $r,x>0$ and $n\geq0$,
\begin{align}\label{45kop9}
n^{\frac{1}{\beta}}\mathbf{P}(M_{\lfloor nr\rfloor}>x\sqrt n)\leq C_{\beta} \frac{r}{x^{2+\frac{2}{\beta}}}.
\end{align}
(ii) Suppose that $\mathbf{E}[|Z_1|^2]<\infty$ and $\mathbb{E}[(X^+)^{4}]<\infty$. Let $C_1>0$ be the constant defined in (\ref{454tyhgty5}) with $\beta=1$. For any $r,x>0$ and $n\geq0$, we have
\begin{align}\label{45k00fd3d49}
n\mathbf{P}(M_{\lfloor nr\rfloor}>x\sqrt n)\leq C_{1} \frac{r}{x^{4}}.
\end{align}
\end{lemma}
\begin{proof}  For $y\in\mathbb{R}$ and $n\geq0$, set
 \begin{align}\label{56hyy681}
 v_n(y):=\mathbf{P}(Z_n((y,\infty))>0)=\mathbf{P}(M_n>y).
 \end{align}
Recall that $\{W_i\}_{i\geq0}$ is a random walk with step size following the distribution of $-X$ and $H(s)=[s-1+\sum^{\infty}_{i=0}p_i(1-s)^i]/s$. We make the convention that the empty product equals $1$. Fix $m\geq0$. By \cite[Proposition 14]{lalley2015}, we have
$$Z^{(m)}_k:=v_{m-k}(W_k)\prod^{k-1}_{j=1}\big[1-H(v_{m-j}(W_j))\big], 0\leq k\leq m,$$
is a martingale w.r.t. the natural filtration of $\{W_k\}_{0\leq k\leq m}$. Thus, for any $y$, we have
\begin{align}\label{4rrgt54pl2}
v_m(y)=\mathbb{E}_y\Big[v_{m-k}(W_k)\prod^{k-1}_{j=1}\big[1-H(v_{m-j}(W_j))\big]\Big].
\end{align}
Set
$$\gamma:=\inf\{i\geq0:W_i\leq x\sqrt n/2\}.$$
For $r,x>0$ and $n\geq 0$, let $y=x\sqrt n$, $m=\lfloor nr\rfloor$ and $k=m\wedge\gamma$ in (\ref{4rrgt54pl2}). By Doob's bounded stopping time theorem, it yields that
\begin{align}\label{45lop1}
v_{\lfloor nr\rfloor}(x\sqrt n)=\mathbb{E}_{x\sqrt n}\Big[v_{\lfloor nr\rfloor-{\lfloor nr\rfloor}\wedge\gamma}(W_{\lfloor nr\rfloor\wedge\gamma})
\prod^{\lfloor nr\rfloor\wedge\gamma-1}_{j=1}\big[1-H(v_{\lfloor nr\rfloor-j}(W_j))\big]\Big].
\end{align}
 Since $0\leq H(s)\leq 1$ (see the line above Lemma \ref{454t67ji1}), by (\ref{45lop1}), it follows that
\begin{align}\label{978ki4gh1}
&v_{\lfloor nr\rfloor}(x\sqrt n)\cr
&\leq\mathbb{E}_{x\sqrt n}\Big[v_{\lfloor nr\rfloor-{\lfloor nr\rfloor}\wedge\gamma}(W_{\lfloor nr\rfloor\wedge\gamma})\Big]\cr
&=\mathbb{E}_{x\sqrt n}\Big[v_{\lfloor nr\rfloor-{\lfloor nr\rfloor}\wedge\gamma}(W_{\lfloor nr\rfloor\wedge\gamma})\ind_{\{\gamma\leq {\lfloor nr\rfloor}\}}\Big]\cr
&=\mathbb{E}_{x\sqrt n}\Big[v_{\lfloor nr\rfloor-{\lfloor nr\rfloor}\wedge\gamma}(W_{\gamma})\ind_{\{\gamma\leq {\lfloor nr\rfloor}\}}\left(\ind_{\{W_{\gamma }/\sqrt n\geq x/4\}}+\ind_{\{W_{\gamma }/\sqrt n< x/4\}}\right)\Big],
\end{align}
where the first equality follows from the fact that if $\gamma>{\lfloor nr\rfloor}$, then $W_{\lfloor nr\rfloor}>x\sqrt n/2>0$ and
\begin{align}
v_{\lfloor nr\rfloor-{\lfloor nr\rfloor}\wedge\gamma}(W_{\lfloor nr\rfloor\wedge\gamma})\leq\mathbf{P}(M_0\geq x\sqrt n/2)=0.\nonumber
\end{align}
\par
Without loss of generality, we only prove (i) of Lemma \ref{566u9g13q}. By Corollary \ref{4gtty6y675}, there exists a constant $c_1$ such that for any $x>0$,
$$\mathbf{P}(M\geq x)\leq c_1x^{-\frac{2}{\beta}}.$$
Note that for any $z>x\sqrt n/4$ and $n\geq 1$, we have
\begin{align}\label{rtmjkio49b}
n^{\frac{1}{\beta}}v_n(z)=n^{\frac{1}{\beta}}\mathbf{P}(M_n\geq z)\leq n^{\frac{1}{\beta}}\mathbf{P}(M\geq x\sqrt n/4)\leq c_14^{\frac{2}{\beta}}x^{-\frac{2}{\beta}}.
\end{align}
Plugging (\ref{rtmjkio49b}) into (\ref{978ki4gh1}) yields that
\begin{align}\label{5657hyuol}
n^{\frac{1}{\beta}}v_{\lfloor nr\rfloor}(x\sqrt n)\leq c_14^{\frac{2}{\beta}}x^{-\frac{2}{\beta}}\mathbb{P}_{x\sqrt n}(\gamma\leq \lfloor nr\rfloor)+n^{\frac{1}{\beta}}\mathbb{P}_{x\sqrt n}(\gamma\leq \lfloor nr\rfloor,W_{\gamma }\leq x\sqrt n/4).
\end{align}
Recall that $\{S_i\}_{i\geq0}=\{-W_i\}_{i\geq0}$ is the random walk with step size $X$. By Doob's inequality, it yields that
\begin{align}\label{678jo97u}
\mathbb{P}_{x\sqrt n}(\gamma\leq \lfloor nr\rfloor)&=\mathbb{P}_{x\sqrt n}\Big(\min_{0\leq i\leq \lfloor nr\rfloor}W_i\leq x\sqrt n/2\Big)\cr
&=\mathbb{P}_{0}\Big(\min_{0\leq i\leq \lfloor nr\rfloor}W_i\leq -x\sqrt n/2\Big)\cr
&=\mathbb{P}_{0}\Big(\max_{0\leq i\leq \lfloor nr\rfloor}S_i\geq x\sqrt n/2\Big)\cr
&\leq \frac{4}{nx^2}\mathbb{E}\big[\max_{0\leq i\leq \lfloor nr\rfloor}S^2_i\big]\cr
&\leq \frac{16}{nx^2}\mathbb{E}\big[S^2_{\lfloor nr\rfloor}\big]\leq\frac{16\mathbb{E}[X^2]}{x^2}r.
\end{align}
Observe that by the definition of $\gamma$, we have $W_{\gamma-1}\geq x\sqrt n/2.$ Thus, if $\gamma\leq \lfloor nr\rfloor$ and $W_{\gamma}\leq x\sqrt n/4$, then there must exist $i\in {1,2...,\lfloor nr\rfloor}$ (recall that the step size of $\{W_i\}_{i\geq0}$ is $-X$) such that
 $$-X_i\leq x\sqrt n/4-x\sqrt n/2=-x\sqrt n/4.$$
This, together with the Markov inequality, implies that
\begin{align}\label{565hyu634}
&n^{\frac{1}{\beta}}\mathbb{P}_{x\sqrt n}(\gamma\leq \lfloor nr\rfloor,W_{\gamma}\leq x\sqrt n/4)\cr
&\leq n^{\frac{1}{\beta}}\mathbb{P}_{0}(\exists i\in {1,2...,\lfloor nr\rfloor}~\text{s.t.}-X_i\leq -x\sqrt n/4)\cr
&\leq n^{\frac{1}{\beta}}nr\mathbb{P}( X\geq x\sqrt n/4).\cr
&\leq (\frac{4}{x})^{2+\frac{2}{\beta}}\mathbb{E}[(X^+)^{2+\frac{2}{\beta}}]r.
\end{align}
Plugging (\ref{565hyu634}) and (\ref{678jo97u}) into (\ref{5657hyuol}) yields that for any $x,r>0$ and $n\geq0$,
\begin{align}\label{454tyhgty5}
n^{\frac{1}{\beta}}\mathbf{P}(M_{\lfloor nr\rfloor}\geq x\sqrt n)&=n^{\frac{1}{\beta}}v_{\lfloor nr\rfloor}(x\sqrt n)\cr
&\leq\left(16c_14^{\frac{2}{\beta}}\mathbb{E}[X^2]+4^{2+\frac{2}{\beta}}\mathbb{E}[(X^+)^{2+\frac{2}{\beta}}]\right)\frac{1}{x^{2+\frac{2}{\beta}}}r\cr
&=:C_{\beta}\frac{r}{x^{2+\frac{2}{\beta}}} .
\end{align}
We have finished the proof of the lemma.
\end{proof}

\section{Proof of Theorem \ref{thdim2}: Empty Ball With Finite Variance}\label{45gttghy9hn3r}
In this section, we shall obtain asymptotic behaviours of the empty ball of a 2-dimensional branching random walk whose offspring law has finite variance. To this end, we first present a lemma regarding the continuity of the local extinction probability of the super-Brownian motion.
\par
\begin{lemma}\label{4yu7ui89w24}Let $\{X_t\}_{t\geq0}$ be a two dimensional critical super-Brownian motion. Fix $r>0$ and $x\in\mathbb{R}^2$. Then,
$
\mathbb{P}_{\delta_x}(X_t(B(r))=0)
$ is continuous with respect to $t$ on $(0,\infty)$.
\end{lemma}
\begin{proof}
It is well-known (e.g. \cite[p. 2]{ren2021spa}) that if $\phi$ is a bounded nonnegative function, then
\begin{align}\label{4tgyhyu7jt5}
u_{\phi}(s,x):=-\log\mathbb{E}_{\delta_x}\left[e^{-X_{s}(\phi)}\right], s\geq0, x\in \mathbb{R}^2,
\end{align}
is the unique positive solution of the partial differential equation :
 \begin{align}
   \begin{cases}\label{43ty45gtt5y}
   \frac{\partial u(s,x)}{\partial s}=\mathcal{L}u(s,x)-\sigma^2u^2(s,x),\cr
   u(0,x)=\phi(x).
   \end{cases}
   \end{align}
Fix any $t>0$. To prove the lemma, it suffices to show that
$$\mathbb{P}_{\delta_x}(X_{t+s}(B(r))=0)$$
is continuous with respect to $s$ on $(0,\infty).$ Let $\{X^*_t\}_{t\geq0}$ be a super-Bownian motion
 independent of $\{X_t\}_{t\geq0}$, and $\{X^*_t\}_{t\geq0}$ has the same branching mechanism and underlying motion distribution as $\{X_t\}_{t\geq0}$. For any finite measure $\mu$, by (\ref{ujytas}) and (\ref{43ty45gtt5y}), we have
\begin{align}\label{4it46fnr4}
\mathbb{P}_{\mu}(X^*_{t}(B(r))=0)
&=\lim_{\theta\to+\infty}\mathbb{E}_{\mu}[e^{-\theta X^*_{t}(\ind_{B(r)})}]\cr
&=\lim_{\theta\to+\infty}e^{-\int_{\mathbb{R}^2}u_{\theta\ind_{B(r)}}(t,x)\mu(dx)}\cr
&=e^{-\int_{\mathbb{R}^2}\lim_{\theta\to+\infty}u_{\theta\ind_{B(r)}}(t,x)\mu(dx)}\cr
&=e^{-\int_{\mathbb{R}^2}-\log \mathbb{P}_{\delta_x}(X^*_t(B(r))=0)\mu(dx)},
\end{align}
where the third equality follows from L\'evy's monotone convergence theorem and
 \begin{align}
 u_{\theta\ind_{B(r)}}(t,x)=-\log \mathbb{E}_{\delta_x}[e^{-\theta X^*_{t}(\ind_{B(r)})}].
\end{align}
Let
\begin{align}
0\leq \phi(y):=-\log \mathbb{P}_{\delta_y}(X_t^*(B(r))=0))\leq -\log \mathbb{P}_{\delta_0}(X^*_t(\mathbb{R}^2)=0)<\infty.
\end{align}
By the Markov property of $\{X_t\}_{t\geq0}$, it entails that for any $s>0$,
\begin{align}
\mathbb{P}_{\delta_x}(X_{t+s}(B(r))=0)&=\mathbb{P}_{\delta_x}(\mathbb{P}_{\delta_x}(X_{t+s}(B(r))=0|X_s))\cr
&=\mathbb{E}_{\delta_x}[\mathbb{P}_{X_s}(X^*_{t}(B(r))=0)]\cr
&=\mathbb{E}_{\delta_x}\left[e^{X_{s}(\log \mathbb{P}_{\delta_.}(X_t^*(B(r))=0))}\right]\cr
&=\mathbb{E}_{\delta_x}\left[e^{-X_{s}(\phi)}\right]\cr
&=e^{-u_{\phi}(s,x)},\nonumber
\end{align}
where the third equality follows from (\ref{4it46fnr4}).
By (\ref{4tgyhyu7jt5})-(\ref{43ty45gtt5y}), $u_{\phi}(s,x)$ is continuous with respect to $s$ on $(0,\infty)$. Since
\begin{align}
\mathbb{P}_{\delta_x}(X_{t+s}(B(r))=0)=e^{-u_{\phi}(s,x)},
\end{align}
we have $\mathbb{P}_{\delta_x}(X_{t+s}(B(r))=0)$ is continuous with respect to $s$ on $(0,\infty)$.
\end{proof}

Now, we are ready to prove Theorem \ref{thdim2}. The key points of the proof are as follows. We first use the maximum, investigated in Lemma \ref{566u9g13q}, to control the hitting probability $\mathbf{P}(Z_n(\sqrt nB(x,r))>0)$. Then, applying the fact that the scaling limit of the branching random walk is the super-Brownian motion, we obtain Theorem \ref{thdim2}.
\begin{proof}
In \cite[(3.12)]{xzacta}, we proved that
\begin{align}\label{5ty7hy61h}
\limsup_{n\to\infty}\mathbb{P}(R_n\geq \sqrt n r)\leq \exp\left\{\int_{\mathbb{R}^2}\log\mathbb{P}_{\delta_0}(X_1(B(x,r))=0)dx\right\}.
\end{align}
Moreover, from \cite[Theorem 1,2]{xz21}, we have
\begin{align}
\exp\left\{\int_{\mathbb{R}^2}\log\mathbb{P}_{\delta_0}(X_1(B(x,r))=0)dx\right\}\in(0,1).
\end{align}
Hence, it suffices to show the lower bound:
\begin{align}\label{456hu8k3df}
\liminf_{n\to\infty}\mathbb{P}(R_n\geq \sqrt n r)\geq \exp\left\{\int_{\mathbb{R}^2}\log\mathbb{P}_{\delta_0}(X_1(B(x,r))=0)dx\right\}.
\end{align}
\par

To this end, by the branching property, we have
\begin{align}\label{897ojki2fn}
\mathbb{P}(R_n\geq \sqrt n r)
&=\mathbb{P}(Z_n(B(\sqrt nr))=0)\cr
&=\mathbb{E}\left[\prod_{u\in Z_0}\mathbb{P}_{\delta_{S_u}}(Z_n(B(\sqrt nr))=0)\right]\cr
&=\mathbb{E}\left[e^{\sum_{u\in Z_0}\log\mathbb{P}_{\delta_{S_u}}(Z_n(B(\sqrt nr))=0)}\right]\cr
&=\mathbb{E}\left[e^{-\int_{\mathbb{R}^2}-\log \mathbb{P}_{\delta_{x}}(Z_n(B(\sqrt nr))=0)Z_0(dx)}\right]\cr
&=\exp\left\{-\int_{\mathbb{R}^2}\mathbb{P}_{\delta_{x}}(Z_n(B(\sqrt nr))>0)dx\right\}\cr
&=\exp\left\{-\int_{\mathbb{R}^2}n\mathbb{P}_{\delta_0}(Z_n(\sqrt nB(x,r))>0)dx\right\},
\end{align}
where the 5-th equality follows from the Laplace functional formula of the Poisson random measure $Z_0$. Fix $\varepsilon>0$. For $|x|\leq(1+\varepsilon)r$ and $n\geq0$, we have
\begin{align}\label{57hu89k}
n\mathbb{P}_{\delta_0}(Z_n(\sqrt nB(x,r))>0)\leq \sup_{n\geq0}n\mathbf{P}(|Z_n|>0)=:c<\infty,
\end{align}
where the last inequality follows from \cite[p. 19]{athreya72}. Recall that $x^{(i)}$ is the $i$-th component of the vector $x$. Let $\bar{M}_n :=\max\{|S_u|:u\in Z_n\}$, $M'_{n,i}:=\max\{|S^{(i)}_u|:u\in Z_n\}$,
$M''_{n,i}:=\max\{S^{(i)}_u:u\in Z_n\}$ and $M'''_{n,i}:=\min\{S^{(i)}_u:u\in Z_n\}$, where $i=1,2$. By (ii) of Lemma \ref{566u9g13q}, there exists a constant $\rho>0$ such that for $|x|\geq(1+\varepsilon)r$ and $n\geq0$,
\begin{align}\label{5ytghy78i}
&n\mathbf{P}(Z_n(\sqrt nB(x,r))>0)\cr
&\leq n\mathbf{P}(\bar{M}_n\geq \sqrt n(|x|-r))\cr
&\leq \sum^2_{i=1}n\mathbf{P}\left(M'_{n,i}>\frac{(|x|-r)\sqrt {n}}{\sqrt 2}\right)\cr
&\leq \sum^2_{i=1}n\left[\mathbf{P}\left(M''_{n,i}>\frac{(|x|-r)\sqrt {n}}{\sqrt 2}\right)+\mathbf{P}\left(M'''_{n,i}<-\frac{(|x|-r)\sqrt {n}}{\sqrt 2}\right)\right]\cr
&\leq \frac{\rho}{(|x|-r)^4}.
\end{align}
Since
$$c\ind_{B(0,(1+\varepsilon)r)}(x)+\frac{\rho}{(|x|-r)^4}\ind_{B^c(0,(1+\varepsilon)r)}(x), x\in\mathbb{R}^2$$
is integrable on $\mathbb{R}^2$, by Fatou's lemma, we have
\begin{align}\label{5hythtt7j4r}
\liminf_{n\to\infty}\mathbb{P}(R_n\geq \sqrt nr)\geq \exp\left\{-\int_{\mathbb{R}^2}\limsup_{n\to\infty}n\mathbf{P}(Z_n(\sqrt nB(x,r))>0)dx\right\}.
\end{align}
\par
Next, we give an upper bound of $\limsup_{n\to\infty}n\mathbf{P}(Z_n(\sqrt nB(x,r))>0)dx$ by using the super-Brownian motion. Fix $s\in\left(0,\frac{\varepsilon^2}{(r+2\varepsilon)^2}\right)$. Let $\widetilde{{sn}}:=n-\lfloor(1-s)n\rfloor\in[sn,sn+1]$ for short. By the branching property, we have for $n$ large enough,
\begin{align}\label{454t5t89g}
&\mathbf{P}\Big(Z_{n}(\sqrt{n} B(x,r))>0\Big)\cr
&=\mathbf{E}\Big[1-\prod_{v\in Z_{\lfloor(1-s)n\rfloor}}\mathbb{P}_{\delta_{S_v}}(Z_{\widetilde{{sn}}}(\sqrt{ n}B(x,r))=0)\Big]\cr
&=\mathbf{E}\Big[1-\exp\Big\{\int_{\mathbb{R}^2}\log\mathbb{P}_{\delta_{u}}(Z_{\widetilde{{sn}}}(B(\sqrt{n}x,\sqrt {n}r))=0)Z_{\lfloor(1-s)n\rfloor}(du)\Big\}\Big]\cr
&\leq\mathbf{E}\Big[1-\exp\Big\{-2\int_{\mathbb{R}^2} \mathbb{P}_{\delta_{u}}(Z_{\widetilde{{sn}}}(B(\sqrt nx,\sqrt {n}r))>0)Z_{\lfloor(1-s)n\rfloor}(du)\Big\}\Big].
\end{align}
Let us clarify the last inequality of (\ref{454t5t89g}). Since $\lim_{y\to1-}\log(y)/(1-y)=-1$ and
$$\mathbb{P}_{\delta_{u}}(Z_{\widetilde{{sn}}}(\sqrt {n}B(x,r))=0)\geq \mathbb{P}_{\delta_0}(|Z_{\widetilde{{sn}}}|=0),$$
we have
$$\lim_{n\to\infty}\sup_{u\in\mathbb{R}^2}\Bigg{|}\frac{\log \mathbb{P}_{\delta_{u}}(Z_{\widetilde{{sn}}}(\sqrt {n}B(x,r))=0)}{ \mathbb{P}_{\delta_{u}}(Z_{\widetilde{{sn}}}(B(\sqrt nx,\sqrt {n}r))>0)}+1\Bigg{|}=0,$$
which implies that the last inequality of (\ref{454t5t89g}) holds. It is simple to see that
\begin{align}\label{5657hy89w}
&\int_{\mathbb{R}^2} \mathbb{P}_{\delta_{u}}(Z_{\widetilde{{sn}}}(B(\sqrt nx,\sqrt nr))>0)Z_{\lfloor(1-s)n\rfloor}(du)\cr
&=\int_{|u-\sqrt {n}x|\geq \sqrt {n}(r+\varepsilon)} \mathbb{P}_{\delta_{u}}(Z_{\widetilde{{sn}}}(B(\sqrt {n}x,\sqrt {n}r))>0)Z_{\lfloor(1-s)n\rfloor}(du)+\cr
&~~~~\int_{|u-\sqrt {n}x|< \sqrt {n}(r+\varepsilon)} \mathbb{P}_{\delta_{u}}(Z_{\widetilde{{sn}}}(B(\sqrt {n}x,\sqrt {n}r))>0)Z_{\lfloor(1-s)n\rfloor}(du).
\end{align}
For ${|u-\sqrt {n}x|\geq \sqrt {n}(r+\varepsilon)}$, similarly to (\ref{5ytghy78i}), there exists $c_{\varepsilon}$ depending only on $\varepsilon$ such that
\begin{align}\label{465gth51}
&\mathbb{P}_{\delta_{u}}(Z_{\widetilde{{sn}}}(B(\sqrt {n}x,\sqrt {n}r))>0)\cr
&\leq\mathbb{P}_{\delta_{0}}(\bar{M}_{\widetilde{{sn}}}\geq |\sqrt {n}x-u|-\sqrt {n}r)\cr
&\leq\mathbb{P}_{\delta_{0}}(\bar{M}_{\widetilde{{sn}}}\geq \varepsilon\sqrt {n})\cr
&\leq \sum^2_{i=1}\mathbf{P}\left(M''_{\widetilde{{sn}},i}>\frac{\varepsilon\sqrt {n}}{\sqrt 2}\right)+\sum^2_{i=1}\mathbf{P}\left(M'''_{\widetilde{{sn}},i}<-\frac{\varepsilon\sqrt {n}}{\sqrt 2}\right)\cr
&\leq \frac{2c_{\varepsilon}s}{n}.
\end{align}
By (\ref{57hu89k}), if ${|u-\sqrt {n}x|\leq \sqrt {n}(r+\varepsilon)}$, then
\begin{align}\label{656hyhy5}
\mathbb{P}_{\delta_{u}}(Z_{\widetilde{{sn}}}(B(\sqrt {n}x,\sqrt {n}r))>0)\leq \mathbf{P}\left(|Z_{\widetilde{{sn}}}|>0\right)\leq \frac{c}{sn}.
\end{align}
Plugging (\ref{465gth51}) and (\ref{656hyhy5}) into (\ref{5657hy89w}) yields that
\begin{align}\label{465ykopfe4}
&\int_{\mathbb{R}^2} \mathbb{P}_{\delta_{u}}(Z_{\widetilde{{sn}}}(B(\sqrt {n}x,\sqrt {n}r))>0)Z_{\lfloor(1-s)n\rfloor}(du)\cr
&\leq \frac{2c_{\varepsilon}s}{n}Z_{\lfloor(1-s)n\rfloor}(B^c(\sqrt {n}x,\sqrt {n}(r+\varepsilon)))+\frac{c}{sn}Z_{\lfloor(1-s)n\rfloor}(B(\sqrt {n}x,\sqrt {n}(r+\varepsilon))).
\end{align}
Plugging (\ref{465ykopfe4}) into (\ref{454t5t89g}) yields that
\begin{align}\label{47hu901}
&\mathbf{P}(Z_{n}(\sqrt nB(x,r))>0)\cr
&\leq1-\mathbf{E}\left[\exp\left\{-\frac{4c_{\varepsilon}s}{n}Z_{\lfloor(1-s)n\rfloor}(B^c(\sqrt {n}x,\sqrt {n}(r+\varepsilon)))-\right.\right.\cr
&~~~~\left.\left.\frac{2c}{sn}Z_{\lfloor(1-s)n\rfloor}(B(\sqrt {n}x,\sqrt {n}(r+\varepsilon)))\right\}\right]\cr
&= 1-\Big(\mathbb{E}_{n\delta_0}\left[\exp\left\{-\frac{4c_{\varepsilon}s}{n}Z_{\lfloor(1-s)n\rfloor}(B^c(\sqrt {n}x,\sqrt {n}(r+\varepsilon)))-\right.\right.\cr
&~~~~\left.\left.\frac{2c}{sn}Z_{\lfloor(1-s)n\rfloor}(B(\sqrt {n}x,\sqrt {n}(r+\varepsilon)))\right\}\right]\Big)^{1/n}.
\end{align}
It is well-known that under the probability $\mathbb{P}_{n\delta_0}$ (see \cite[p. 75]{lalley2015} for instance), we have
\begin{align}
\lim_{n\to\infty}\left\{\frac{Z_{\lfloor nt\rfloor}(\sqrt n\cdot)}{n}\right\}_{t\geq0}\overset{\text{weakly}}=\{X_t(\cdot)\}_{t\geq0},
\end{align}
with $X_0=\delta_0$. It is easy to check that for a non-negative sequence $\{a_n\}_{n\geq0}$, if $\lim_{n\to\infty}a_n=a\in(0,\infty)$, then $\lim_{n\to\infty}n(1-a_n^{1/n})=-\log a.$ This, combined with (\ref{47hu901}), yields that for any $s\in(0,1)$ and $\varepsilon>0$,
\begin{align}\label{45ghyhu68n}
&\limsup_{n\to\infty} n\mathbb{P}_{\delta_0}(Z_{n}(\sqrt nB(x,r))>0)\cr
&\leq-\log\mathbb{E}_{\delta_0}\Bigg[\exp\left\{-{4c_{\varepsilon}s}X_{1-s}\Big(B^c\big({x},{r+\varepsilon}\big)\Big)-
\frac{2c}{s}X_{1-s}\Big(B\big({x},{r+\varepsilon}\big)\Big)\right\}\Bigg].
\end{align}
\par

We proceed to deal with the r.h.s. (\ref{45ghyhu68n}). Recall that $\lambda$ is the Lebeguese measure on $\mathbb{R}^2$. By (\ref{ujytas}), we have
\begin{align}
&\exp\left\{\int_{\mathbb{R}^2}\log\mathbb{E}_{\delta_0}\Bigg[\exp\left\{-{4c_{\varepsilon}s}X_{1-s}\Big(B^c\big({x},{r+\varepsilon}\big)\Big)-
\frac{2c}{s}X_{1-s}\Big(B\big({x},{r+\varepsilon}\big)\Big)\right\}\Bigg]dx\right\}\cr
&=\mathbb{E}_{\lambda}\left[\exp\Bigg\{X_{1-s}\left(-4c_{\varepsilon}s\ind_{B^c({r+\varepsilon})}-\frac{2c}{s}\ind_{B(r+\varepsilon)}\right)\Bigg\}\right].
\end{align}
From \cite[p. 52]{Etheridge}, we have the following scale invariance property: if $X_0=\lambda$, then for any $\eta,~t>0$ and bounded measurable function $\phi$ on $\mathbb{R}^2$,
\begin{align}\label{pplow2s}
X_t(\phi)\overset{\text{Law}}=\frac{1}{\eta^2}X_{\eta^2 t}(\phi(\cdot/\eta)).
\end{align}
Let $t=1-s$ and $\eta=\frac{1}{\sqrt{1-s}}$, then the scaling property yields that
\begin{align}\label{5yjujuht5q}
&\mathbb{E}_{\lambda}\left[\exp\Bigg\{X_{1-s}\left(-4c_{\varepsilon}s\ind_{B^c({r+\varepsilon})}-\frac{2c}{s}\ind_{B(r+\varepsilon)}\right)\Bigg\}\right]\cr
&=\mathbb{E}_{\lambda}\left[\exp\Bigg\{-\frac{4c_{\varepsilon}s}{1-s}X_{1}\left(\frac{B^c(r+\varepsilon)}{\sqrt{1-s}}\right)\Bigg\}
\exp\Bigg\{-\frac{2c}{s(1-s)}X_1\left(\frac{B(r+\varepsilon)}{\sqrt{1-s}}\right)\Bigg\}\right]\cr
&\geq\mathbb{E}_{\lambda}\left[\exp\Bigg\{-\frac{4c_{\varepsilon}s}{1-s}X_{1}\left(\mathbb{R}^2\right)\Bigg\}
\exp\Bigg\{-\frac{2c}{s(1-s)}X_1\left(B(r+2\varepsilon)\right)\Bigg\}\right],
\end{align}
where the last inequality follows from the fact that since $s\leq \frac{\varepsilon^2}{(r+2\varepsilon)^2}$, we have
$$
\frac{B(r+\varepsilon)}{\sqrt{1-s}}\subset B(r+2\varepsilon).
$$
By the dominated convergence theorem, (\ref{5yjujuht5q}) yields that
\begin{align}\label{54hhyu7uh2}
&\lim_{s\to0+}\mathbb{E}_{\lambda}\left[\exp\Bigg\{X_{1-s}\left(-4c_{\varepsilon}s\ind_{B^c({r+\varepsilon})}-\frac{2c}{s}\ind_{B(r+\varepsilon)}\right)\Bigg\}\right]\cr
&\geq\mathbb{P}_{\lambda}(X_1\left(B(r+2\varepsilon)\right)=0)\cr
&=\mathbb{P}_{\lambda}\left(X_{(r+2\varepsilon)^{-2}}\left(B(1)\right)=0\right)\cr
&=e^{\int_{\mathbb{R}^2}\log \mathbb{P}_{\delta_y}\left(X_{(r+2\varepsilon)^{-2}}\left(B(1)\right)=0\right)dy},
\end{align}
where the first equality comes from (\ref{pplow2s}), and the second equality follows from similar arguments as those of (\ref{4it46fnr4}). Since
$$\int_{\mathbb{R}^2}\log \mathbb{P}_{\delta_y}\left(X_{(r+2\varepsilon)^{-2}}\left(B(1)\right)=0\right)dy=\log \mathbb{P}_{\lambda}(X_1\left(B(r+2\varepsilon)\right)=0),$$
it follows that
$$\int_{\mathbb{R}^2}\log \mathbb{P}_{\delta_y}\left(X_{(r+2\varepsilon)^{-2}}\left(B(1)\right)=0\right)dy$$
is increasing as $\varepsilon\to0+$. Thus, by L\'evy's monotonic convergence theorem and Lemma \ref{4yu7ui89w24}, we have
\begin{align}\label{5hyhyu7r4}
\lim_{\varepsilon\to0+}\int_{\mathbb{R}^2}\log \mathbb{P}_{\delta_y}\left(X_{(r+2\varepsilon)^{-2}}\left(B(1)\right)=0\right)dy&=\int_{\mathbb{R}^2}\lim_{\varepsilon\to0+}\log \mathbb{P}_{\delta_y}\left(X_{(r+2\varepsilon)^{-2}}\left(B(1)\right)=0\right)dy\cr
&=\int_{\mathbb{R}^2}\log \mathbb{P}_{\delta_y}\left(X_{r^{-2}}\left(B(1)\right)=0\right)dy\cr
&=\log\mathbb{P}_{\lambda}(X_1\left(B(r)\right)=0),
\end{align}
where the last equality follows from the last equality of (\ref{54hhyu7uh2}).

\par

Finally, by (\ref{5hythtt7j4r}), we have for any $\varepsilon>0,~s\in\left(0,\frac{\varepsilon^2}{(r+2\varepsilon)^2}\right)$,
\begin{align}\label{65hgyjuye3tb}
&\liminf_{n\to\infty}\mathbb{P}\left(\frac{R_n}{\sqrt n }\geq r\right)\cr
&\geq \exp\left\{-\int_{\mathbb{R}^2}\limsup_{n\to\infty}n\mathbb{P}_{\delta_0}(Z_n(\sqrt nB(x,r))>0)dx\right\}\cr
&\geq\exp\left\{\int_{\mathbb{R}^2}\log\mathbb{E}_{\delta_0}\Bigg[\exp\left\{-{4c_{\varepsilon}s}X_{1-s}\Big(B^c\big({x},{r+\varepsilon}\big)\Big)-
\frac{2c}{s}X_{1-s}\Big(B\big({x},{r+\varepsilon}\big)\Big)\right\}\Bigg]dx\right\}\cr
&=\mathbb{E}_{\lambda}\left[\exp\Bigg\{X_{1-s}\left(-4c_{\varepsilon}s\ind_{B^c({r+\varepsilon})}-\frac{2c}{s}\ind_{B(r+\varepsilon)}\right)\Bigg\}\right],
\end{align}
where the second inequality comes from (\ref{45ghyhu68n}).
Taking $s\to0+$ on the both sides of (\ref{65hgyjuye3tb}) and then using (\ref{54hhyu7uh2}) yields that for any $\varepsilon>0$,
\begin{align}\label{45gtrhyt6u}
\liminf_{n\to\infty}\mathbb{P}\left(\frac{R_n}{\sqrt n }\geq r\right)\geq e^{\int_{\mathbb{R}^2}\log \mathbb{P}_{\delta_y}\left(X_{(r+2\varepsilon)^{-2}}\left(B(1)\right)=0\right)dy}.
\end{align}
Taking $\varepsilon\to0+$ on both sides of (\ref{45gtrhyt6u}) and then using (\ref{5hyhyu7r4}) yields that
\begin{align}\label{45gtrhytfbgngdg6u}
\liminf_{n\to\infty}\mathbb{P}\left(\frac{R_n}{\sqrt n }\geq r\right)
&\geq\mathbb{P}_{\lambda}(X_1\left(B(r)\right)=0)\cr
&=\exp\left\{\int_{\mathbb{R}^2}\log\mathbb{P}_{\delta_0}\left(X_1(B(x,r))=0\right)dx\right\}.
\end{align}
This, combined with (\ref{5ty7hy61h}), concludes Theorem \ref{thdim2}.
\end{proof}
\section{Proof of Theorem \ref{thdim4}: Empty Ball With Infinite Variance}\label{8i87ujr4g}
This section aims to study the empty ball of the branching random walk with infinite variance offspring law. The proofs are divided into three cases according to $d\beta<2$, $d\beta=2$ and $d\beta>2$.
\bigskip

\noindent{\bf Case~1:~$d\beta<2.$} The proof is a modification of \cite[Theorem 1.8]{xzacta}. So, we just put some additional steps here.
\begin{proof}Recall that $b_n=[nL(\mathbb{P}_{\delta_0}(Z_n(\mathbb{R}^d)>0))]^{\frac{1}{\beta d}},~n\geq1$, where $L(\cdot)$ is a slowly varying function at $0$. Similarly to (\ref{897ojki2fn}), we have
\begin{align}\label{5hf6787i8yt56}
\mathbb{P}(R_n\geq b_nr)=\exp\left\{-\int_{\mathbb{R}^d}\mathbb{P}_{\delta_x}(Z_n(B(b_nr))>0)dx\right\}.
\end{align}
 We first claim that
\begin{align}\label{4rt5tgtl}
\mathbb{P}_{\delta_x}\left(Z_n(B(b_nr))>0\Big{|}|Z_n|>0\right)\geq \mathbb{P}(|x+S_n|\leq b_nr).
\end{align}
In fact, under the event $\{|Z_n|>0\}$, there exist many random walks (at least one) alive at time $n$. So, the probability of exists one particle located in $B(b_nr)$ at time $n$ is larger than the probability of exactly one random walk located in $B(b_nr)$.
For the upper bound of $\mathbb{P}(R_n/b_n\geq r)$, it is simple to see that
\begin{align}\label{54hyt7uhy}
&\int_{\mathbb{R}^d}\mathbb{P}_{\delta_x}(Z_n(B(b_nr))>0)dx\cr
&=\int_{\mathbb{R}^d}\mathbb{P}_{\delta_x}(|Z_n|>0)\mathbb{P}_{\delta_x}\left(Z_n(B(b_nr))>0\Big{|}|Z_n|>0\right)dx\cr
&\geq \int_{\mathbb{R}^d}\mathbb{P}_{\delta_x}(|Z_n|>0)\mathbb{P}(|x+S_n|\leq b_nr)dx\cr
&=\mathbb{P}_{\delta_0}(|Z_n|>0)\mathbb{E}\left[\int_{\mathbb{R}^d}\ind_{\{|x+S_n|\leq b_nr\}}dx\right]\cr
&=\mathbb{P}_{\delta_0}(|Z_n|>0)b_n^dv_d(r),
\end{align}
where the first inequality follows from (\ref{4rt5tgtl}). By \cite[Lemma 2]{slack1968}, we have
\begin{align}\label{p0kq2gz}
\lim_{n\to\infty}\mathbb{P}_{\delta_0}(|Z_n|>0)b_n^d=(\beta^{-1})^{\beta^{-1}}.
\end{align}
This, combined with (\ref{54hyt7uhy}), yields that
\begin{align}\label{90o0e3aq2}
\lim_{n\to\infty}\mathbb{P}\left(\frac{R_n}{b_n}\geq r\right)\leq \exp\left\{-v_d(r)(\beta^{-1})^{\beta^{-1}}\right\}.
\end{align}

\par
We proceed to deal with the lower bound. Fix $\delta>0$. By the Markov inequality, we have
\begin{align}\label{54gfhyu7u7de}
\int_{\mathbb{R}^d}\mathbb{P}_{\delta_x}(Z_n(B(b_nr))>0)dx
&\leq \int_{|x|<b_n(r+\delta)}\mathbb{P}_{\delta_0}(|Z_n|>0)dx+\int_{|x|\geq b_n(r+\delta)}\mathbb{E}_{\delta_x}[Z_n(B(b_nr))]dx\cr
&\leq \mathbb{P}_{\delta_0}(|Z_n|>0)b_n^dv_d(r+\delta)+\int_{|x|\geq b_n(r+\delta)}\mathbb{P}_{x}(S_n\in B(b_nr))dx.
\end{align}
\par
Next, we are going to show that the second term on the r.h.s. of (\ref{54gfhyu7u7de}) tends to $0$ as $n\to\infty$. Copied from \cite[Corollary 1.8]{Nagaev}, if $\mathbb{E}[|X|^{\alpha}]<\infty$ for some $\alpha\geq2$, then there exist positive constants $c_{\alpha}$ and $c'_{\alpha}$ such that for any $n\geq 1$ and $x>0$,
\begin{align}\label{465juiqw2}
\mathbb{P}(|S^{(1)}_n|\geq x)\leq \frac{c_{\alpha}n}{x^{\alpha}}+2\exp\{-c'_{\alpha}x^2/n\}.
\end{align}
By Fubini's theorem,
\begin{align}\label{45gthypgd4}
&\int_{|x|\geq b_n(r+\delta)}\mathbb{P}_{x}(S_n\in(B(b_nr)))dx\cr
&=\mathbb{E}\left[\int_{\mathbb{R}^d}\ind_{B^c(b_n(r+\delta))}(x)\ind_{\{|x+S_n|\leq b_nr\}}dx, |S_n|\geq b_n\delta\right]\cr
&\leq v_d(b_nr)\mathbb{P}\left(|S_n|\geq b_n\delta\right)\cr
&\leq db_n^dv_d(r)\mathbb{P}\left(|S^{(1)}_n|\geq b_n\delta/\sqrt{d}\right)\cr
&\leq dd^{\alpha/2}v_d(r)b_n^d\frac{c_{\alpha}n}{(b_n\delta)^{\alpha}}+2dv_d(r)b_n^d\exp\{-c'_{\alpha}b_n^2\delta^2/(dn)\},
\end{align}
where the last inequality follows from (\ref{465juiqw2}). If $\alpha>(\beta+1)d$, then the first term on the r.h.s. of (\ref{45gthypgd4}) tends to $0$. If $\beta d<2$, then the second term on the r.h.s. of (\ref{45gthypgd4}) tends to $0$. Thus, under the assumption of Theorem \ref{thdim4} (i), we have
\begin{align}\label{4gfhyu7hfr4e3}
\lim_{n\to\infty}\int_{|x|\geq b_n(r+\delta)}\mathbb{P}_{x}(S_n\in B(b_nr))dx=0.
\end{align}
\par
Finally, plugging (\ref{4gfhyu7hfr4e3}) into (\ref{54gfhyu7u7de}) and then using (\ref{p0kq2gz}) entails that for any $\delta>0$,
\begin{align}\label{54hyu7i8gt5}
\lim_{n\to\infty}\int_{\mathbb{R}^d}\mathbb{P}_{\delta_x}(Z_n(B(b_nr))>0)dx\leq v_d(r+\delta)(\beta^{-1})^{\beta^{-1}}.
\end{align}
Letting $\delta\to0+$ in (\ref{54hyu7i8gt5}), then pugging it into (\ref{5hf6787i8yt56}) yields that
\begin{align}
\lim_{n\to\infty}\mathbb{P}\left(\frac{R_n}{b_n}\geq r\right)\geq \exp\left\{-v_d(r)(\beta^{-1})^{\beta^{-1}}\right\}.
\end{align}
This, together with (\ref{90o0e3aq2}), concludes Theorem \ref{thdim4}.
\end{proof}
\bigskip

\noindent {\bf Case~2:~$d\beta=2.$} The proof of this case is similar to the proof of the $2$-dimensional branching random walk with finite variance offspring law (i.e. Theorem \ref{thdim2}).
\begin{proof}
 If $d\beta=2$, then
\begin{align}
\mathbb{P}(R_n\geq \sqrt n r)
&=\exp\left\{-\int_{\mathbb{R}^d}\mathbb{P}_{\delta_x}(Z_n(B(\sqrt n r))>0)dx\right\}\cr
&=\exp\left\{-\int_{\mathbb{R}^d}n^{d/2}\mathbb{P}_{\delta_0}(Z_n(\sqrt nB(y,r))>0)dy\right\}\cr
&=\exp\left\{-\int_{\mathbb{R}^d}n^{\frac{1}{\beta}}\mathbb{P}_{\delta_0}(Z_n(\sqrt nB(x,r))>0)dx\right\}.
\end{align}
Assume $\lim_{n\to\infty} n^{^{1+\beta}}\sum^{\infty}_{k=n}p_k=\kappa(\beta)$ with $\beta\in(0,1)$. It is well-known that (e.g. \cite[(1.15)]{hou2024}), under the probability $\mathbb{P}_{\lfloor n^{\frac{1}{\beta}}\rfloor\delta_0}$, we have
\begin{align}
\lim_{n\to\infty}\left\{\frac{Z_{\lfloor nt\rfloor}(\sqrt n\cdot)}{n^{\frac{1}{\beta}}}\right\}_{t\geq0}\overset{\text{weakly}}=\{X_t(\cdot)\}_{t\geq0},
\end{align}
where $\{X_t(\cdot)\}_{t\geq0}$ is a super-Brownian motion with initial value $X_0=\
\delta_0$ and stable branching mechanism $\psi(u)=\frac{1}{\beta}{\kappa(\beta)\Gamma(1-\beta)}u^{1+\beta}$.
The following steps are almost the same as the proof of Theorem \ref{thdim2}. The main changes are to replace $n$ with $n^{\frac{1}{\beta}}$ and replace $\mathbb{R}^2$ with $\mathbb{R}^d$. So, we feel free to omit the proofs here.
\end{proof}

\medskip

\noindent {\bf Case~3:~$d\beta>2.$} One can observe that the result of this case is parallel with \cite[Theorem 1.5]{xzacta}. However, since the offspring law may have infinite variance, the Paley-Zygmund inequality used in \cite[Theorem 1.5]{xzacta} does not work anymore. To overcome this difficulty, we use the spine decomposition method developed in Rapenne \cite[Lemma 6.2]{rapenne2023}. Nonetheless, since our underlying motion is a general random walk rather than the stable process studied in \cite[Lemma 6.2]{rapenne2023}, our proofs are more delicate.
\par Let's give a brief introduction to the spine decomposition method. Let $\{\mathcal{F}_n\}_{n\geq0}$ be the natural filtration of $\{|Z_n|\}_{n\geq0}$. Since $(\{|Z_n|\}_{n\geq0},\{\mathcal{F}_n\}_{n\geq0},\mathbf{P})$ is a martingale with mean $1$, we can define the probability measure $\mathbf Q$ through
\begin{align}
\frac{d\mathbf Q}{d\mathbf{P}}\Big{|}\mathcal{F}_n:=|Z_n|.
\end{align}
Let $p^*_k:=kp_k,~k\geq0$. Define the size biased branching random walk as follows. We start with one particle called $w_0$ located at the origin in $\mathbb{R}^d$. It defines the generation $0$ of the size biased branching random walk. For generation $n\geq1$, we define it recursively. Let $Z_n$ be the set of particles in generation $n$. We also have a marked
particle $w_n$ among $Z_n$. The particle $w_n$ produce children according to the offspring law $\{p^*_k\}_{k\geq0}$ (note that since $p^*_0=0$,  $w_n$ has at least one children). The children of $w_n$ jump independently from the position of $w_n$ according to the law of the step size $X$ under $\mathbf{P}$. Among the children of $w_n$, we choose uniformly at random a special particle called $w_{n+1}$. Moreover, every particle $u\neq w_n$ at generation $n$ gives birth independently according to the law of $\{p_k\}_{k\geq0}$. The offsprings of $u$
jump from the position of $u$ according to the law of $X$ under $\mathbf{P}$ independently. The offsprings of $Z_n$ with their new positions (including the marked particle $w_{n+1}$) form the $(n + 1)$-th generation $Z_{n+1}$. Let $\{\mathcal{F}^*_n\}_{n\geq0}$ be the natural filtration of $(\{Z_n\}_{n\geq0}, \{w_n\}_{n\geq0})$ and $Q^*$ be the extension of the measure $Q$ on $\{\mathcal{F}^*_n\}_{n\geq0}$. With this construction, we call $(\{Z_n\}_{n\geq0}, \{w_n\}_{n\geq0},\{\mathcal{F}^*_n\}_{n\geq0}, Q^*)$ is a size biased branching random walk with spine $\{w_n\}_{n\geq0}$. From this definition, we have
for every $n\geq1$, $Q^*|\mathcal{F}_n=Q|\mathcal{F}_n$. Moreover, for every particle $u$ at generation $n$,
\begin{align}\label{5hyy7fre31}
Q^*( w_n=u|\mathcal{F}_n)=\frac{1}{|Z_n|}.
\end{align}
Recall that $\{Z^u_n\}_{n\geq0}$ is the sub-branching random walk emanating from particle $u$ with $Z^u_0=\delta_0$. $X_u$ is the displacement of particle $u$. $b(w_k)$ is the set of brothers of $w_k$. $S_{w_k}$ is the position of $w_k$. For $(\{Z_n\}_{n\geq0}, \{w_n\}_{n\geq0},\{\mathcal{F}^*_n\}_{n\geq0}, Q^*)$, it is simple to see that for any $n\geq1$, measurable set $C$ and $x\in\mathbb{R}^d$,
\begin{align}\label{3rftgt8j2g}
Z_n(C-x)=\ind_{\{S_{w_n\in C-x}\}}+\sum^{n-1}_{k=0}\sum_{u\in b(w_{k+1})}Z^u_{n-k-1}(C-x-S_{w_k}-X_u),
\end{align}
which is called the {\em spine decomposition.} For more detailed description; see \cite[Section 2.4]{rapenne2023}.
\par
Now, we are ready to prove Theorem \ref{thdim4} (iii).
\begin{proof}
Similarly to (\ref{897ojki2fn}), we have
\begin{align}\label{34opmkq21}
\mathbb{P}(R_n\geq r)=\exp\left\{-\int_{\mathbb{R}^d}\mathbb{P}_{\delta_x}(Z_n(B(r))>0)dx\right\}.
\end{align}
Fix $r>0$. Let $A:=B(r)$ for short. Thus, to prove Theorem \ref{thdim4}, it suffices to calculate $\lim_{n\to\infty}\int_{\mathbb{R}^d}\mathbf{P}(Z_n(A-x)>0)dx$.
\par
We first express $\mathbf{P}(Z_n(A-x)>0)$ as a functional of a size biased branching random walk by the spine decomposition method. Fix $n\geq1$.
Let
\begin{align}\label{5gjidwa2rf}
Y^n_k(z)=\sum_{u\in b(w_{k+1})}Z^u_{n-k-1}(A-z-X_u), 0\leq k\leq n-1.
\end{align}
Note that $\{b(w_{k+1})\}_{k\geq0}$ are i.i.d.  and independent of $\{Z^u_{k-1}(A-z-X_u)\}_{\{u\in b(w_k), k\geq0\}}$. Moreover,  $\{Z^u_{k-1}(A-z-X_u)\}_{\{u\in b(w_k), k\geq0\}}$ are i.i.d.. Thus, for $n+1\geq k\geq1$ and $z\in\mathbb{R}^d$,
\begin{align}\label{5yhyuhyr41}
Y^n_{n-k}(z)=\sum_{u\in b(w_{n-k+1})}Z^u_{k-1}(A-z-X_u)\overset{\text{Law}}=\sum_{u\in b(w_{k+1})}Z^u_{k-1}(A-z-X_u)=:Y_k(z).
\end{align}By (\ref{5hyy7fre31}), we have $Q^*$-a.s. (note that $Q^*(|Z_n|\geq1)=Q(|Z_n|\geq1)=1$),
\begin{align}
{Q^*}(S_{w_n}\in A-x|\mathcal{F}_n)=\frac{Z_n(A-x)}{|Z_n|}.
\end{align}
Define $\frac{0}{0}=:0$. Thus,
\begin{align}
\mathbb{E}_{Q^*}\left[\frac{\ind_{\{S_{w_n}\in A-x\}}}{Z_n(A-x)}\right]
&=\mathbb{E}_{Q^*}\left[\frac{\ind_{\{Z_n(A-x)\geq 1\}}}{Z_n(A-x)}\mathbb{E}_{Q^*}\left[{\ind_{\{S_{w_n}\in A-x\}}}\Big{|}\mathcal{F}_n\right]\right]\cr
&=\mathbb{E}_{Q^*}\left[\frac{\ind_{\{Z_n(A-x)\geq 1\}}}{Z_n(A-x)}\frac{Z_n(A-x)}{|Z_n|}\right]\cr
&=\mathbb{E}_{Q}\left[\frac{\ind_{\{Z_n(A-x)\geq 1\}}}{|Z_n|}\right]\cr
&=\mathbf{P}(Z_n(A-x)\geq 1).
\end{align}
This, combined with (\ref{3rftgt8j2g}), yields that
\begin{align}\label{4tggt68gtak3}
\mathbf{P}(Z_n(A-x)\geq 1)&=\mathbb{E}_{Q^*}\left[\frac{\ind_{\{S_{w_n}\in A-x\}}}{Z_n(A-x)}\right]\cr
&=\mathbb{E}_{Q^*}\left[\frac{\ind_{\{S_{w_n}\in A-x\}}}{\ind_{\{S_{w_n\in A-x}\}}+\sum^{n-1}_{k=0}\sum_{u\in b(w_{k+1})}Z^u_{n-k-1}(A-x-S_{w_k}-X_u)}\right]\cr
&=\mathbb{E}_{Q^*}\left[\frac{\ind_{\{S_{w_n}\in A-x\}}}{1+\sum^{n-1}_{k=0}Y^n_k(x+S_{w_k})}\right]\cr
&=\mathbb{E}_{Q^*}\left[\frac{\ind_{\{S_{w_n}\in A-x\}}}{1+\sum^{n}_{i=1}Y^n_{n-i}(x+S_{w_{n-i}})}\right]\cr
&=\mathbb{E}_{Q^*}\left[\frac{\ind_{\{S_{w_n}\in A-x\}}}{1+\sum^{n}_{k=1}Y_{k}(x+S_{w_{n-k}})}\right]\cr
&=\mathbb{E}_{Q^*}\left[\frac{\ind_{\{S_{w_n}\in A-x\}}}{1+\sum^{n}_{k=1}Y_{k}(x+S_{w_{n}}-S_{w_{k}})}\right].
\end{align}
The third equality of (\ref{4tggt68gtak3}) comes from (\ref{5gjidwa2rf}). The penultimate equality follows from (\ref{5yhyuhyr41}) and the fact that $\{S_{w_{i}}\}_{i\geq1}$ and $\{Y_{i}(z)\}_{i\geq1}$ are independent. The last equality follows from the fact that the backward random walk $\{S_{w_n}-S_{w_{n-i}}\}_{0\leq i\leq n}$ has the same distribution as $\{S_{w_i}\}_{0\leq i\leq n}$ and independent of $\{Y_k(z)\}_{k\geq0}.$
\par

Next, we split $\int_{\mathbb{R}^d}\mathbf{P}(Z_n(A-x)>0)dx$ into three parts. By (\ref{4tggt68gtak3}), we have for any $M>0$ and integers $n\geq K\geq1$,
\begin{align}\label{46yhj89012dr4}
&\int_{\mathbb{R}^d}\mathbf{P}(Z_n(A-x)>0)dx\cr
&=\int_{|x|>M\sqrt n}\mathbf{P}(Z_n(A-x)>0)dx+\int_{|x|\leq M\sqrt n}  \mathbb{E}_{Q^*}\left[\frac{\ind_{\{S_{w_n}\in A-x\}}}{1+\sum^{n}_{k=1}Y_{k}(x+S_{w_{n}}-S_{w_k})}\right]dx\cr
&=\int_{|x|>M\sqrt n}\mathbf{P}(Z_n(A-x)>0)dx+\int_{|x|\leq M\sqrt n}I_{K,n}(x)dx+\cr
&~~~~\int_{|x|\leq M\sqrt n}\mathbb{E}_{Q^*}\left[\frac{\ind_{\{S_{w_n}\in A-x\}}}{1+\sum^{K}_{k=1}{Y}_k(x+S_{w_n}-S_{w_k})})\right]dx,
\end{align}
where
\begin{align}\label{4gtrtrww2v}
I_{K,n}(x):=\mathbb{E}_{Q^*}\left[\frac{\ind_{\{S_{w_n}\in A-x\}}}{1+\sum^{n}_{k=1}Y_{k}(x+S_{w_{n}}-S_{w_k})}-\frac{\ind_{\{S_{w_n}\in A-x\}}}{1+\sum^{K}_{k=1}Y_{k}(x+S_{w_{n}}-S_{w_k})}\right].
\end{align}
Hence, to prove the theorem, it suffices to study asymptotics of the three terms on the r.h.s. of (\ref{46yhj89012dr4}) respectively.
\par
We first deal with the first term on the r.h.s. of (\ref{46yhj89012dr4}). Recall that $A=B(r)$. By the Markov inequality and Fubini's theorem, we have
\begin{align}\label{565yh9op1f}
\int_{|x|> M\sqrt n}\mathbf{P}(Z_n(A-x)>0)dx
&\leq \int_{|x|> M\sqrt n}\mathbb{E}_{\delta_x}[Z_n(B(r))]dx\cr
&= \int_{|x|> M\sqrt n}\mathbb{P}(|S_n+x|< r)dx\cr
&= \int_{|x|> M\sqrt n} \int_{\mathbb{R}^d}\ind_{\{|y+x|<r\}}\mathbb{P}(S_n\in dy)dx\cr
&\leq\int_{|y|\geq M\sqrt n-r}\int_{\mathbb{R}^d}\ind_{\{|y+x|<r\}}dx\mathbb{P}(S_n\in dy)\cr
&=v_d(r)\mathbb{P}(|S_n|\geq M\sqrt n-r).
\end{align}
By the central limit theorem, above yields that
\begin{align}\label{5tey7uyu672f}
\lim_{M\to\infty}\lim_{n\to\infty}\int_{|x|> M\sqrt n}\mathbf{P}(Z_n(A-x)>0)dx=0.
\end{align}
\par

We proceed to deal with the second term on the r.h.s. of (\ref{46yhj89012dr4}).  Denote by
$\mathcal{G}$ the $\sigma$-field generated by the spine and number of children of particles in the spine
and positions of the brothers of the spine. Since the covariance matrix of $X$ is invertible, by \cite[p. 218, Corollary 1]{stone1967}, it entails that there exists a constant $c>0$ depending only on $A$ such that for any $x\in\mathbb{R}^d$, $n\geq k\geq1$ and particle $u$,
\begin{align}\label{46u7pl1d}
&\mathbb{E}_{Q^*}[Z^u_{k-1}(A-x-S_{w_{n-k}}-X_u)|\mathcal{G}]\cr
&=\mathbb{P}(S_{k-1}\in A-z-\rho)\big{|}_{z=x+S_{w_{n-k}},\rho=X_u}\cr
&\leq \frac{c}{{k}^{d/2}}.
\end{align}
Since $(\sum^n_{k=K+1}\lambda_i)^{\beta}\leq\sum^n_{k=K+1}\lambda^{\beta}_i$ for any non-negative $\lambda_i$ and $\beta\in(0,1)$, by the Markov inequality, there exists a constant $C_A$ depending only on $A$ such that for any $n\geq K+1\geq1$ and $x\in\mathbb{R}^d$,
\begin{align}\label{45hyuj8ow2}
&Q^*\left(S_{w_n}\in A-x,\sum^n_{k=K+1}Y_k(x+S_{w_{n-k}})\geq 1\right)\cr
&\leq\mathbb{E}_{Q^*}\left[\ind_{\{S_{w_n}\in A-x\}}\sum^n_{k=K+1}\mathbb{E}_{Q^*}\left[Y_k(x+S_{w_{n-k}})^{\beta}|\mathcal{G}\right]\right]\cr
&\leq\sum^n_{k=K+1}\mathbb{E}_{Q^*}\left[\ind_{\{S_{w_n}\in A-x\}}\left(\sum_{u\in b(w_{k+1})}\mathbb{E}_{Q^*}[Z^u_{k-1}(A-x-S_{w_{n-k}}-X_u)|\mathcal{G}]\right)^{\beta}\right]\cr
&\leq\sum^n_{k=K+1}\frac{c^{\beta}}{k^{d\beta/2}}\mathbb{E}_{Q^*}\left[\ind_{\{S_{w_n}\in A-x\}} |b(w_{k+1})|^{\beta}\right]\cr
&\leq C_An^{-d/2}\sum^{+\infty}_{k=K+1}\frac{1}{k^{d\beta/2}}.
\end{align}
In (\ref{45hyuj8ow2}), the second inequality follows from Jensen's inequality. The third inequality follows from (\ref{46u7pl1d}), where $|b(w_{k+1})|$ stands for the cardinality of the set $b(w_{k+1})$. The last equality follows from the fact that
$$\mathbb{E}_{Q^*}\left[\ind_{\{S_{w_n}\in A-x\}} |b(w_{k+1})|^{\beta}\right]\leq \mathbb{E}_{Q^*}\left[|b(w_{k+1})|^{\beta}\right]=\sum^{\infty}_{k=1}(k-1)^{\beta}p^*_{k}\leq \sum^{\infty}_{k=1}k^{1+\beta}p_k<\infty.$$
Since $\beta d>2$, the summation on the r.h.s. of (\ref{45hyuj8ow2}) is finite. Thus, by (\ref{4gtrtrww2v}), it follows that
\begin{align}\label{465gty912}
&\lim_{K\to\infty}\lim_{n\to\infty}\Big{|}\int_{|x|\leq M\sqrt n}I_{K,n}(x)dx\Big{|}\cr
&\leq \lim_{K\to\infty}\lim_{n\to\infty}\int_{|x|\leq M\sqrt n}Q^*\left(S_{w_n}\in A-x,\sum^n_{k=K+1}Y_k(x+S_{w_{n-k}})\geq 1\right)dx\cr
&\leq\lim_{K\to\infty}\lim_{n\to\infty} \sum^{+\infty}_{k=K+1}\frac{C_A}{k^{d\beta/2}}n^{-d/2}\int_{|x|\leq M\sqrt n}dx=0.
\end{align}

\par
It remains to deal with the third term on the r.h.s. of (\ref{46yhj89012dr4}). It is easy to see that for $n$ satisfying $M\sqrt n/2>r+|S_{w_K}|$, on the event $\{|S_{w_n}-S_{w_K}|\leq M\sqrt n/2\}$, it follows that
\begin{align}\label{4gftbgbtgt5}
A-S_{w_K}-(S_{w_n}-S_{w_K})\subset B(M\sqrt n).
\end{align}
Recall that $\lambda$ is the $d$-dimensional Lebesgue measure. Recall that $\mathcal{F}^*_k$ is the $\sigma$-algebra which contains the information of the size biased branching random walk $(\{Z_n\}_{n\geq0}, \{w_n\}_{n\geq0})$ up to time $k$. Set $\mathbb{P}_{\mathcal{F}^*_k}(\cdot):=\mathbb{P}(\cdot|{\mathcal{F}^*_k})$, $\mathbb{E}_{\mathcal{F}^*_k}[\cdot]:=\mathbb{E}[\cdot|{\mathcal{F}^*_k}].$ By Fubini's theorem, if $n>(2(r+|S_{w_K}|)/M)^2$, then for any measurable set $B\subset A$, we have
\begin{align}
&\int_{|x|\leq M\sqrt n}\mathbb{P}_{\mathcal{F}^*_K}(S_{w_n}-S_{w_K}+x+S_{w_K}\in B)dx\cr
&=:\int_{|x|\leq M\sqrt n}\mathbb{P}_{\mathcal{F}^*_K}(S_{w_n}-S_{w_K}+x+S_{w_K}\in B,|S_{w_n}-S_{w_K}|\leq M\sqrt n/2)dx+H_n(B)\cr
&=\mathbb{E}_{\mathcal{F}^*_K}\left[\int_{|x|\leq M\sqrt n}\ind_{\{x\in B-S_{w_n}+S_{w_K}-S_{w_K}\}}dx,|S_{w_n}-S_{w_K}|\leq M\sqrt n/2\right]+H_n(B)\cr
&=\mathbb{E}_{\mathcal{F}^*_K}\left[\lambda\Big{(}B(M\sqrt n)\cap \big{(}B-S_{w_K}-(S_{w_n}-S_{w_K})\big{)}\Big{)},|S_{w_n}-S_{w_K}|\leq M\sqrt n/2\right]+H_n(B)\cr
&=\mathbb{E}_{\mathcal{F}^*_K}\left[\lambda(B),|S_{w_n}-S_{w_K}|\leq M\sqrt n/2\right]+H_n(B)\cr
&=\lambda(B)\mathbb{P}(|S_{w_n}-S_{w_K}|\leq M\sqrt n/2)+H_n(B),
\end{align}
where the fourth equality follows from (\ref{4gftbgbtgt5}). Hence, for $n>(2(r+|S_{w_K}|)/M)^2$, the third term on the r.h.s. of (\ref{46yhj89012dr4}) equals to
\begin{align}\label{466y9i21h}
&\int_{|x|\leq M\sqrt n}\mathbb{E}_{Q^*}\left[\frac{\ind_{\{S_{w_n}\in A-x\}}}{1+\sum^{K}_{k=1}{Y}_k(x+S_{w_n}-S_{w_k})})\right]dx\cr
&=\int_{|x|\leq M\sqrt n}\mathbb{E}_{Q^*}\left[\int_{A}\frac{\mathbb{P}_{\mathcal{F}^*_K}(S_{w_n}-S_{w_K}+x+S_{w_K}\in dy)}{1+\sum^{K}_{k=1}{Y}_k(y-S_{w_k})}\right]dx\cr
&=\mathbb{E}_{Q^*}\left[\int_{A}\frac{1}{1+\sum^{K}_{k=1}{Y}_k(y-S_{w_k})}dy\right]\mathbb{P}(|S_{w_n}-S_{w_K}|\leq M\sqrt n/2)+\cr
&~~~~\mathbb{E}_{Q^*}\left[\int_{A}\frac{1}{1+\sum^{K}_{k=1}{Y}_k(y-S_{w_k})}H_n(dy)\right]\cr
&=:Q_1+Q_2.
\end{align}
Since $H_n(B)\leq \lambda(B)\mathbb{P}(|S_{w_n}-S_{w_K}|>M\sqrt n/2)$, we have
\begin{align}\label{45g6734gty6}
\lim_{M\to\infty}\lim_{n\to\infty}Q_2
&\leq \lim_{M\to\infty}\lim_{n\to\infty}\mathbb{E}_{Q^*}\left[H_n(A)\right]\cr
&\leq\lim_{M\to\infty}\lim_{n\to\infty}\lambda(A)\mathbb{P}(|S_{w_n}-S_{w_K}|>M\sqrt n/2)=0,
\end{align}
where the last equality follows from the central limit theorem. For $Q_1$, it is simple to see that
\begin{align}\label{46yhy89j2}
\lim_{M\to\infty}\lim_{K\to\infty}\lim_{n\to\infty}Q_1=\mathbb{E}_{Q^*}\left[\int_{A}\frac{1}{1+\sum^{\infty}_{k=1}{Y}_k(y-S_{w_k})}dy\right]=:I_A,
\end{align}
where the first equality follows from applying the central limit theorem as $n\to\infty$ and then the dominated convergence theorem as $K\to\infty$. Plugging (\ref{46yhy89j2}) and (\ref{45g6734gty6}) into (\ref{466y9i21h}) yields that
\begin{align}\label{45hyhy91f}
\lim_{M\to\infty}\lim_{K\to\infty}\lim_{n\to\infty}\int_{|x|\leq M\sqrt n}\mathbb{E}_{Q^*}\left[\frac{\ind_{\{S_{w_n}\in A-x\}}}{1+\sum^{K}_{k=1}{Y}_k(x+S_{w_n}-S_{w_k})})\right]dx=I_A.
\end{align}
\par
We claim that $I_A\in(0,\infty)$. In fact, by Fubini's theorem, we can easily obtain that $I_A\leq \lambda(A)<\infty.$ We proceed to prove $I_A>0$. It is simple to see that for $y\in A=B(r)$, we have
\begin{align}\label{4rr6yu1}
Y_k(y-S_{w_k})&=\sum_{u\in b(w_{k+1})}Z^u_{k-1}(A-y+S_{w_k}-X_u)\cr
&\leq \sum_{u\in b(w_{k+1})}Z^u_{k-1}(B(2r)+S_{w_k}-X_u)=:\hat{Y}_k(-S_{w_k}).
\end{align}
Using similar arguments to obtain (\ref{45hyuj8ow2}), it yields that there exists a constant $c_r$ depending only on $r$ such that for any $k\geq1$,
\begin{align}
Q^*(\hat{Y}_k(-S_{w_k})\geq1)\leq \frac{c_r}{k^{d\beta/2}},\nonumber
\end{align}
which is summable w.r.t. $k$. Hence, by the Borel-Cantelli's lemma, $Q^*$-almost surely, there is only a finite number of integers $k$ such that $\hat{Y}_k(-S_{w_k})\geq1$. Namely,
\begin{align}\label{5gui9w2}
\sum^{\infty}_{k=1}\hat{Y}_k(-S_{w_k})<\infty,~Q^*\text{-a.s.}
\end{align}
This, together with Fubini's theorem, implies that
\begin{align}
I_A&=\int_{A}\mathbb{E}_{Q^*}\left[\frac{1}{1+\sum^{\infty}_{k=1}{Y}_k(y-S_{w_k})}\right]dy\cr
&=\int^{\infty}_0\frac{1}{(1+t)^2}\int_{A}{Q^*}\left(\sum^{\infty}_{k=1}{Y}_k(y-S_{w_k})\in[0,t]\right)dydt\cr
&\geq \lambda(A)\int^{\infty}_0\frac{1}{(1+t)^2}{Q^*}\left(\sum^{\infty}_{k=1}\hat{Y}_k(-S_{w_k})\in[0,t]\right)dt \cr
&>0,
\end{align}
where the first inequality follows from (\ref{4rr6yu1}) and the last inequality follows from (\ref{5gui9w2}).
\par
Finally, taking first $n\to\infty$, then $K\to\infty$ and at last $M\to\infty$ on the both sides of (\ref{46yhj89012dr4}), and plugging (\ref{5tey7uyu672f}), (\ref{465gty912}) and (\ref{45hyhy91f}) into (\ref{46yhj89012dr4}) yields that
\begin{align}
\lim_{n\to\infty}\int_{\mathbb{R}^d}\mathbf{P}(Z_n(A-x)>0)dx=I_A.\nonumber
\end{align}
This, combined with (\ref{34opmkq21}), concludes that
\begin{align}
\lim_{n\to\infty}\mathbb{P}(R_n\geq r)=e^{-I_A}\in(0,1).\nonumber
\end{align}
We have finished the proof of Theorem \ref{thdim4} (iii).
\end{proof}

\textbf{Acknowledgements} I am very grateful to Prof. Jie Xiong, whose joint works \cite{xz21}\cite{xzacta} with me have laid the foundation for the present. I would also like to extend my special thanks to Dr. Haojie Hou, who has provided many concrete and effective ideas in this subject. My sincere thanks go to the two referees: their careful reading of the manuscript and many suggestions have improved this paper significantly. Shuxiong Zhang is supported by NSF of China (No. 12501176), and by the National Key R\&D Program of China 2022YFA1006102.

\bigskip
\textbf{References}

\end{document}